\title{Valuative invariants for linearized line bundles on a spherical variety}
\author{Chenxi Yin}
\date{}
\newtheorem{thm}{Theorem}[section]
\newtheorem{remark}{Remark}[section]
\newtheorem{lem}{Lemma}[section]
\newtheorem{prop}{Proposition}[section]
\newtheorem{cor}{Corollary}[section]
\theoremstyle{definition}
\newtheorem{definition}{Definition}[section]
\theoremstyle{definition}
\newtheorem{exm}{Example}[section]
\newtheorem*{ac}{Acknowledgement}
\newcommand{\F}{\mathcal{F}}
\begin{document}

\maketitle

\begin{abstract}
    We give explicit formulas for various valuative invariants of linearized ample line bundles on projective spherical varieties. Then we show how to use these formulas to study $g$-solitons on a Fano spherical variety. As a corollary, we show that for a Fano horospherical manifold $X$, the corresponding cone $(K_{X})^{\times}$ always admits a Calabi-Yau cone metric.
\end{abstract}

\section{Introduction}

Let $X$ be a Fano manifold. In the quest of Kähler–Einstein metrics, the so called $\alpha-$invariant was introduced in \cite{tian1987kahler}. The $\alpha$-invariant plays a crucial role in \cite{tianyau1987kahler} in determining which projective surfaces admit Kähler–Einstein metrics.

It is explained in \cite[Theorem A.3]{cheltsov2008log} that the $\alpha-$invariant can be defined as:
\[\alpha(X,-K_{X}) := \inf\{\mathrm{lct}(X,D)|D \text{ is an effective } \mathbb{Q-}\text{divisor on X and } D\sim_{\mathbb{Q}}-K_{X}\} \]
where:
\[\mathrm{lct}(X,D) := \inf_{E}\frac{A_{X}(E)}{\mathrm{ord}_{E}(D)}=\inf_{v}\frac{A_{X}(v)}{\mathrm{ord}_{v}(D)}\]
Here $A_{X}$ is the log discrepancy function, $E$ runs through all the prime divisors over $X$, meaning that there is a proper birational morphism $\mu:Y\rightarrow X$ from a normal $Y$, with $E$ a prime divisor on $Y$, whereas $v$ runs through all the non-trivial valuations with finite log discrepancy.  
A famous theorem of Tian says that:
\begin{thm}\cite[Theorem 2.1]{tian1987kahler}
    Let $X$ be a Fano manifold. If $\alpha(X,-K_{X}) > \frac{\mathrm{dim}(X)}{\mathrm{dim}(X)+1}$, then we have a Kähler–Einstein metric on $X$.
\end{thm}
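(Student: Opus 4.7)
My plan is to use Aubin's continuity method. I would fix a reference Kähler form $\omega_0 \in c_1(X)$ with Ricci potential $h_0$, and consider the family of complex Monge--Ampère equations
\[
(\omega_0 + i\partial\bar\partial \varphi_t)^n = e^{h_0 - t\varphi_t}\,\omega_0^n, \quad t \in [0,1],
\]
so that $\omega_t = \omega_0 + i\partial\bar\partial\varphi_t$ satisfies $\mathrm{Ric}(\omega_t) = t\omega_t + (1-t)\omega_0$, and a solution at $t=1$ is a Kähler--Einstein metric. Let $T \subseteq [0,1]$ be the set of $t$ for which such a $\varphi_t$ exists. Yau's resolution of the Calabi conjecture gives $0 \in T$, and openness at each $t < 1$ follows from the implicit function theorem, since the linearization $\Delta_{\omega_t} + t$ is invertible on mean-zero functions when $t < 1$.

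The crux is to prove that $T$ is closed. I would invoke Yau's higher-order estimates for the complex Monge--Ampère equation to reduce this to a uniform $C^0$ bound on the normalized potentials $\varphi_t$, with the convention $\sup_X \varphi_t = 0$. The $\alpha$-invariant hypothesis enters through its analytic incarnation: fix any $\alpha$ with $\tfrac{n}{n+1} < \alpha < \alpha(X,-K_X)$, where $n = \dim X$. By the equivalence between the algebraic $\alpha$-invariant and the uniform integrability threshold of $\omega_0$-plurisubharmonic functions (the classical Tian integrability lemma, reinterpreted via Demailly's approximation), one obtains a uniform integral bound
\[
\int_X e^{-\alpha \varphi_t}\, \omega_0^n \leq C_\alpha,
\]
with $C_\alpha$ independent of $t$.

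The hard part will be to upgrade this integral estimate to an $L^\infty$ bound on $-\varphi_t$. My approach is Moser iteration applied to $-\varphi_t$, using the Monge--Ampère equation $\omega_t^n = e^{h_0 - t\varphi_t}\omega_0^n$ to rewrite the volume form and the Sobolev inequality on $(X,\omega_0)$ to boost the iteration. At each step the exponent of $-\varphi_t$ gains a factor determined by the Sobolev constant, while the right-hand side is absorbed using the exponential integrability above; the arithmetic condition $\alpha > n/(n+1)$ is precisely what forces the resulting geometric progression of exponents to converge, yielding $\sup_X(-\varphi_t) \leq C$ uniformly in $t$. This delicate balancing of exponents is the technical obstacle. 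Once the uniform $C^0$ bound is in place, closedness of $T$ follows, $1 \in T$, and the resulting $\omega_1$ is the desired Kähler--Einstein metric.
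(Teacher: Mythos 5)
First, a point of reference: the paper does not prove this statement at all — it is quoted as background from \cite{tian1987kahler} — so the only meaningful comparison is with Tian's original argument, and your outline is indeed that argument's architecture: Aubin's continuity path, openness via the implicit function theorem (at $0<t<1$ the linearization $\Delta_{\omega_t}+t$ is invertible on all of $C^{k,\alpha}$, not just mean-zero functions, because the Bochner formula with $\mathrm{Ric}(\omega_t)=t\omega_t+(1-t)\omega_0>t\omega_t$ gives $\lambda_1(\omega_t)>t$; mean-zero only matters at $t=0$), Yau's higher-order estimates reducing closedness to a uniform $C^0$ bound, and the $\alpha$-invariant supplying the uniform bound $\int_X e^{-\alpha\varphi_t}\,\omega_0^n\le C_\alpha$. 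Up to that point your plan is correct and is essentially the classical proof.

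The genuine gap is in the final step, and as you describe it the mechanism would fail. Moser iteration applied to $-\varphi_t$ with the Sobolev inequality of the \emph{fixed} metric $\omega_0$ has an exponent progression that converges for any positive Sobolev constant; no condition on $\alpha$, let alone the precise threshold $n/(n+1)$, can emerge from "convergence of the geometric progression of exponents." In Tian's proof the threshold enters elsewhere: one needs a Harnack-type inequality along the path, roughly $-\inf_X\varphi_t\le n\,\sup_X\varphi_t+C$ (equivalently, control of $\sup\varphi_t-\mathrm{avg}_{\omega_0}\varphi_t$ and of $-\inf\varphi_t+\mathrm{avg}_{\omega_t}\varphi_t$, together with the functional inequalities $\tfrac{1}{n+1}I\le I-J\le\tfrac{n}{n+1}I$), and this is proved by Moser iteration or Green's function estimates \emph{with respect to the evolving metrics} $\omega_t$, whose uniform Sobolev/Green constants come from $\mathrm{Ric}(\omega_t)\ge t\,\omega_t$ and $t$ bounded away from $0$ (Myers plus Croke/Li--Yau), not from $(X,\omega_0)$. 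Only when this Harnack inequality is combined with the exponential integrability and the Monge--Ampère equation does the dimensional factor $n$ produce exactly the requirement $\alpha>\tfrac{n}{n+1}$. So your proposal names the right ingredients but misattributes where the arithmetic condition does its work, and it omits the uniform geometric control of $\omega_t$ (positivity of $\mathrm{Ric}(\omega_t)$) without which the key comparison between $\sup\varphi_t$ and $-\inf\varphi_t$ is not available; filling in that Harnack step is precisely the content of Tian's theorem rather than a routine technicality.
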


We see that the $\alpha-$invariant is a valuative invariant and closely related to singularities. This suggests there should exist a theorem relating the existence of a Kähler–Einstein metric on a Fano manifold to the analysis of singularity via the valuative theory. This program was achieved after significant efforts (see for example \cite{xu2020positivity,li2022g,liu2022finite}). Several valuative invariants have been introduced in this context. In particular, the $\delta-$invariant was defined in \cite[Definition 0.2]{fujita2018k} and the $\beta-$invariant was defined in \cite[Definition 3.5] {Li2017k} and \cite[Definition 1.3]{fujita2019valuative}. It is worth mentioning that the relation between the existence of Kähler–Einstein metrics and singularities also appears in the pioneering work of Nadel \cite{nadel1990multiplier}, where multiplier ideal sheaves were introduced to capture this connection.

Another widely used approach is the theory of K-stability (or Ding-stability). To define these notions of stability, one introduces degenerations of the manifold called test configurations, first considered by Tian in a less general form \cite{TianKEpositive1997}, and later developed in full generality by Donaldson \cite[Definition 2.1.1]{Donaldsonstability2002}. Then one can define numerical invariants such as Futaki invariants and Ding invariants for test configurations as obstructions to the existence of Kähler–Einstein metrics. To consider all the test configurations together paves the way to the definitions of K-stability and Ding-stability. The direction from the existence of a Kähler–Einstein metric to the K-polystability of a $\mathbb{Q}-$Fano variety is proved in \cite[Theorem 1.1]{berman2016k}. The other direction is carried out by \cite{chen2015kahler1,chen2015kahler2,MR3264768,tian2015k}. The equivalence between K-polystability and Ding-polystability for $\mathbb{Q}-$Fano varieties is proved in \cite[Corollary 3.4]{fujita2019valuative}.

The two points of view mentioned in the preceding two paragraphs rely on seemingly different algebraic theories, but they are actually closely related and can be unified by the language of filtrations. To exchange information between the two aspects, one uses the so called special test configurations (Definition \ref{deftf}).

Theories mentioned above are abstract in nature. It is usually difficult to say if a Fano manifold admits Kähler–Einstein metrics, or equivalently, is Ding (K)-polystable or not. It is seldom an easy task to compute the valuative invariants. But the task becomes easier when a group acts on the manifold. Works in this direction include \cite{wang2004kahler,delcroix2015log,delcroix2020k,blum2020thresholds,golota2020delta}, to name just a few.

The purpose of this paper is to study valuative invariants for linearized ample line bundles on spherical varieties. Then we focus on $\mathbb{Q}-$Fano case and use valuative information to study Ding-stability as well as the existence of different canonical metrics. 

Spherical varieties are varieties with large symmetry. To be more precise, let $G$ be a connected reductive complex algebraic group and $B$ a Borel subgroup. Let $X$ be a normal $G-$variety. We say that $X$ is spherical if it contains an open $B-$orbit. The classification theory of spherical $G-$varieties, known as the Luna–Vust theory, was developed in \cite{luna1983plongements}. Toric varieties and flag varieties are two important subclasses of spherical varieties.

 There is a lot of combinatorial information associated to a spherical variety, the most important one being the valuation cone $\mathcal{V}$, which is the set of $G-$invariant $\mathbb{Q}-$valued valuations. Usually the set of valuations for a variety is hard to describe, but $\mathcal{V}$ is rather explicit and has a nice structure. It is actually a polyhedral convex cone inside a $\mathbb{Q}-$vector space $N\otimes \mathbb{Q}$. This nice property allows us to do computations and to have nice conclusions. Sometimes we also use the closure of it inside $N\otimes \mathbb{R}$, which we denote as $\mathcal{V}_{\mathbb{R}}$. When we have a $G$-linearized ample line bundle on a $G-$variety, we can define the moment polytope $\Delta^{+}$ (Definition \ref{moment polytope}), which encodes the most important combinatorial information about a $G-$linearized line bundle. We can also define a polytope $\Delta$ for every $B-$eigenvector $s\in H^{0}(X,L)$ with eigenvalue $\chi$. More detailed information about spherical varieties and $G$-linearized line bundles are given in Section \ref{section 4}. We have $\Delta^{+} = \Delta + \chi$. In this paper, we mainly investigate spherical varieties with a $\mathbb{Q}-$Cartier canonical divisor.

Our first result is a generalization of several results in \cite{delcroix2015log,blum2020thresholds,golota2020delta}. After introducing different valuative invariants in Sections \ref{section 2} and \ref{section 3}, we can explicitly compute:

\begin{thm}
Let $G$ be a connected reductive algebraic group and $X$ a spherical $G-$variety with $\mathbb{Q}$-Cartier $K_{X}$. Let $L$ be a $G$-linearized ample line bundle on $X$. Then we can compute the $\delta^{(p)}$-invariant and $\alpha$-invariant with $G-$action as
\begin{align*}
 \delta^{(p)}_{G}(X,L) &= \min_{v\in E} \frac{h_{\mathcal{C}}(v)}{\left(\frac{\int_{\Delta}P(x)(\langle x,v\rangle +l_{s}(v))^{p}dx}{\int_{\Delta}P(x)dx}\right)^{1/p}},\\
\alpha_{G}(X,L) &= \min_{v\in E} \min_{m\in \Delta}\frac{h_{\mathcal{C}}(v)}{\langle m,v\rangle + l_{s}(v)},
\end{align*}
where $E$ is a finite set inside $\mathcal{V}$. The set $E$ and the functions $h_{\mathcal{C}},l_{s},P(x)$ are explicit and will be introduced in Section \ref{section 4}. The polytope $\Delta$ is the polytope associated to a section $s$ of $L$.

When $p = 1$, we get $\delta_{G}^{(1)}(X,L) = \delta_{G}(X,L)$, the $\delta-$invariant with $G-$action. 
\end{thm}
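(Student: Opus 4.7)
The plan is to reduce the infima defining $\delta_G^{(p)}$ and $\alpha_G$ from the space of all valuations with finite log discrepancy to the combinatorially tractable valuation cone $\mathcal{V}$, and finally to a finite subset $E \subset \mathcal{V}$.

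First I would invoke equivariance. By the standard averaging argument (as used in \cite{golota2020delta} and \cite{blum2020thresholds} for the $\delta$ case and, for $\alpha$, going back to \cite{tian1987kahler}), the $G$-equivariant invariants $\delta_G^{(p)}(X,L)$ and $\alpha_G(X,L)$ can be computed by taking the infimum only over $G$-invariant valuations $v$ with $A_X(v)<\infty$. By Luna--Vust/Knop theory these are exactly the rational points of the valuation cone $\mathcal{V}\subset N\otimes\mathbb{Q}$, so the inf is already an inf over $\mathcal{V}$. For such $v$ the log discrepancy has the explicit form $A_X(v)=h_{\mathcal{C}}(v)$ developed in Section~\ref{section 4} in terms of the colored data $\mathcal{C}$ of $X$.

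Second, I would compute the denominator of each expression for $v\in\mathcal{V}$. A $B$-semi-invariant section $s$ of weight $\chi$ satisfies $\mathrm{ord}_v(s)=\langle\chi,v\rangle$, and the sections of $L^{\otimes k}$ decompose as a $G$-module whose highest weights lie in $k\Delta^{+}=k(\Delta+\chi)$. Therefore the jumping numbers of the filtration $\mathcal{F}^t_v H^0(X,L^{\otimes k})$ are encoded by the piecewise linear function $x\mapsto\langle x,v\rangle+l_s(v)$ on $\Delta$, weighted by Brion's multiplicity polynomial $P(x)$. Asymptotically this turns the graded-linear-series integral into a polytope integral:
\[
 S^{(p)}(L;v)^{p}=\frac{\int_{\Delta}P(x)\bigl(\langle x,v\rangle+l_s(v)\bigr)^{p}\,dx}{\int_{\Delta}P(x)\,dx},
\]
by integrating in $t$ the Duistermaat--Heckman type measure $P(x)\,dx$ pushed forward via $x\mapsto\langle x,v\rangle+l_s(v)$. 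For $\alpha$ the analogous computation replaces the $L^p$-average by a supremum and then, after taking the infimum over sections $s$, by the minimum of $\langle m,v\rangle+l_s(v)$ over $m\in\Delta$. Dividing by $h_{\mathcal{C}}(v)$ gives the claimed ratios, now infimized over $v\in\mathcal{V}$.

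Third, and this is the step I expect to be the main obstacle, I would show that the infimum over $\mathcal{V}$ is attained on a finite set $E$. The numerator $v\mapsto h_{\mathcal{C}}(v)$ is piecewise linear and positively homogeneous of degree one on $\mathcal{V}$, and the denominator is positively homogeneous of degree one as well, so the ratio is constant along rays; this reduces the problem to optimizing on a slice, e.g.\ $\{h_{\mathcal{C}}(v)=1\}\cap\mathcal{V}$. On this slice the denominator for $\delta^{(p)}$ is a convex function of $v$ (since $t\mapsto t^{p}$ is convex and $\langle x,\cdot\rangle+l_s(\cdot)$ is affine for fixed $x$), while for $\alpha$ it is a minimum of affine functions in $v$, hence concave. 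In both cases the ratio $h_{\mathcal{C}}(v)/(\text{denominator})$ attains its minimum at a vertex of the polyhedral subdivision of the slice that refines both the linearity chambers of $h_{\mathcal{C}}$ and those of $l_s$. The finite set $E$ is precisely the primitive generators of the rays through these vertices. The care required here is twofold: verifying that $\mathcal{V}$ itself (and not merely $\mathcal{V}_{\mathbb{R}}$) contains the extremizer, so that the minimum is genuinely attained, and giving the refined polyhedral subdivision explicitly enough that $E$ can be identified combinatorially, both of which should follow from the convex-geometric description of the colored fan of $X$.

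Finally, the case $p=1$ is tautological from the definition of $S$ and $\delta_G$, so $\delta_G^{(1)}(X,L)=\delta_G(X,L)$ is immediate once the above computation is in place.
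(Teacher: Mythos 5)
Your overall strategy is sound and, in substance, it is the paper's: restrict to $G$-invariant (divisorial) valuations, identify them with the valuation cone $\mathcal{V}$, compute $S^{(p)}(L,v)$ and $T(L,v)$ from the multiplicity-free decomposition of $H^{0}(X,kL)$ weighted by the Weyl dimension polynomial, use $A_{X}=h_{\mathcal{C}}$ on $\mathcal{V}$, and then exploit convexity to push the infimum to the extremal generators of the cones $\mathcal{C}_{Y}(X)\cap\mathcal{V}$. Note that no averaging argument is needed at all: $\delta^{(p)}_{G}$ and $\alpha_{G}$ are \emph{defined} as infima over $G$-invariant divisorial valuations, and the only reduction required is the identification of these (up to positive scaling, which leaves the ratios unchanged) with $\mathcal{V}\setminus\{0\}$, as in Remark \ref{rm3.1}. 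Your finite-reduction step is packaged differently from the paper's: you normalize to the slice $\{h_{\mathcal{C}}=1\}$ and argue that a convex function on each polytope piece attains its maximum at a vertex, while the paper writes $v=\sum_{j}a_{j}v_{i_{j}}$ with $a_{j}>0$ inside a single cone, bounds $S^{(p)}(L,v)^{1/p}\le\sum_{j}a_{j}S^{(p)}(L,v_{i_{j}})^{1/p}$ by Minkowski's inequality, uses linearity of $h_{\mathcal{C}}$ on that cone, and concludes by the mediant inequality; these are two renderings of the same convexity fact, and since both $h_{\mathcal{C}}$ and $l_{s}$ are already linear on each colored cone, your ``refined'' subdivision is just the given one and your set $E$ coincides with the paper's.

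Two local slips should be corrected. First, $\mathrm{ord}_{v}(s)=\langle\chi,v\rangle$ is not right (and in general not even meaningful, since the weight $\chi$ of $s$ need not lie in $M$): the correct statement, which your later formula silently uses, is $v(s)=l_{s}(v)$, proved in Section \ref{section 4} by locating the center of $v$ on the affine chart $X_{Y,B}$ and trivializing $L$ there so that $\mathrm{div}(s)$ is locally the divisor of the $B$-eigenfunction $\chi_{s,Y}$; this small lemma is a genuine ingredient and your proposal does not supply it. Second, for $\alpha$ the relevant quantity is $T(L,v)=\max_{m\in\Delta}(\langle m,v\rangle+l_{s}(v))$, a \emph{maximum} of functions affine in $v$ on each cone of linearity, hence convex there --- your ``minimum of affine functions, hence concave'' is backwards, and if it were concave the vertex-maximum argument you invoke would fail; fortunately convexity is exactly what your argument needs, so the conclusion stands once the sign of the extremum (and the corresponding ``$\min_{m}$ of the ratio'' versus ``$\max_{m}$ of the denominator'') is stated correctly. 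There is likewise no infimum over sections $s$ to take: the formula holds for any fixed $B$-eigenvector $s$, since changing $s$ shifts $\Delta$ and $l_{s}$ in compensating ways.
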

 
The key to prove the theorem is to compute the valuative invariant $S(L,v)$ for $v\in 
 \mathcal{V}$ (more generally $S^{(p)}(L,v)$), the expected vanishing order of $L$ along $v$. These invariants are introduced in Section \ref{section 2}.

Then we study $\mathrm{g}-$solitons on a $\mathbb{Q}-$Fano variety in the sense of \cite{berman2014complex}. In order to do this, we need to have a natural torus action at hand. For a spherical $\mathbb{Q}-$Fano $G-$variety $X$ which is an equivariant completion of the spherical homogeneous space $G/H$, $T = (N_{G}(H)/H)^{0}$ is a torus, and it is actually the identity component 
of $\mathrm{Aut}_{G}(X)$. Like above, there is a moment polytope $\Delta^{+}_{T}$ associated to the $T-$action. Fix now a smooth positive function $\mathrm{g}$ on $\Delta^{+}_{T}$. 

First we can compute the valuative invariant $S^{\mathrm{g}}(v)$, the $\mathrm{g}$-weighted expected vanishing order. Then we can get $\delta-$invariant specially designed for $\mathrm{g}-$solitons, namely $\delta_{G}^{\mathrm{g}}(X,-K_{X})$ in Section \ref{section_val_inv_g_soliton}.

\begin{thm}
Let $(X,-K_{X})$ be a $\mathbb{Q}-$Fano spherical $G$-variety which is an equivariant completion of $G/H$. Let $\Delta$ be the polytope associated to the $G-$action. Let $T = (N_{G}(H)/H)^{0}$ and we have the corresponding polytope $\Delta_{T}^{+}$. Assume $\mathrm{g}$ is a strictly positive smooth function on $\Delta_{T}^{+}$. Then
\begin{equation*}
\delta^{\mathrm{g}}_{G}(X,-K_{X}) = \min_{v\in E} \frac{h_{\mathcal{C}}(v)}{h_{\mathcal{C}}(v) + \langle \mathrm{bar}_{\mathrm{DH}}^{\mathrm{g}}(\Delta),v\rangle},
\end{equation*}    
where
\[\mathrm{bar}_{\mathrm{DH}}^{\mathrm{g}}(\Delta) = \frac{\int_{\Delta}\mathrm{g}(\bar{x})P(x)xdx}{\int_{\Delta}\mathrm{g}(\bar{x})P(x)dx}.\]
Here $E$ is a finite set inside $\mathcal{V}$. The set $E$ and the functions $h_{\mathcal{C}},P(x)$ are explicit and will be introduced in Section \ref{section 4}. The polytope $\Delta$ is the polytope associated to a preferable section $u$ of $-K_{X}$ introduced in Section \ref{section 4}. The $\bar{x}$ denotes the projection of $x\in \Delta$ on $\Delta_{T}^{+}$.
\end{thm}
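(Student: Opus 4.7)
The plan is to use the $\mathrm{g}$-weighted valuative characterization
\[ \delta^{\mathrm{g}}_{G}(X,-K_{X}) = \inf_{v} \frac{A_{X}(v)}{S^{\mathrm{g}}(v)}, \]
where $v$ ranges over nontrivial $G$-invariant valuations of finite log discrepancy (the $\mathrm{g}$-weighted analog of the standard characterization, to be set up in Section 6 alongside the definition of $S^{\mathrm{g}}$). Because $X$ is spherical, every such $v$ corresponds to a rational point of the valuation cone $\mathcal{V}$, and the material of Section \ref{section 4} identifies the log discrepancy with the support function $A_{X}(v) = h_{\mathcal{C}}(v)$. The task thus reduces to computing $S^{\mathrm{g}}(v)$ for $v \in \mathcal{V}_{\mathbb{Q}}$ and then minimizing the resulting ratio.

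For the computation of $S^{\mathrm{g}}(v)$, I would decompose each $H^{0}(X,-mK_{X})$ into $B$-eigenspaces. By the structure theory recalled in Section \ref{section 4}, the $B$-weights appearing are of the form $\chi + mx$ for lattice points $x \in m\Delta$, with multiplicity asymptotic to $m^{d}P(x)$ for the appropriate dimension $d$. A $G$-invariant valuation $v \in \mathcal{V}$ vanishes on a $B$-eigensection of weight $\chi + mx$ to order $m(\langle x,v\rangle + l_{u}(v))$, where $u$ is the preferable section of $-K_{X}$. Since $\mathrm{g}$ depends only on the $T$-moment image $\bar{x}$, the $\mathrm{g}$-weighted asymptotic converts this sum into
\[ S^{\mathrm{g}}(v) = \frac{\int_{\Delta}\mathrm{g}(\bar{x})P(x)\bigl(\langle x,v\rangle + l_{u}(v)\bigr)\,dx}{\int_{\Delta}\mathrm{g}(\bar{x})P(x)\,dx}. \]
The defining property of the preferable section for $-K_{X}$ gives $l_{u}(v) = h_{\mathcal{C}}(v)$, whence $S^{\mathrm{g}}(v) = h_{\mathcal{C}}(v) + \langle \mathrm{bar}_{\mathrm{DH}}^{\mathrm{g}}(\Delta),v\rangle$.

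Finally, the ratio $h_{\mathcal{C}}(v)/S^{\mathrm{g}}(v)$ is homogeneous of degree zero in $v$, and $h_{\mathcal{C}}$ is piecewise linear on $\mathcal{V}$ with respect to the colored-fan decomposition, so on each linearity cone the ratio becomes a linear-fractional function whose extrema over a rational polyhedral cone are attained at the rays. Taking $E$ to be the finite set of these extremal rays, which is the same set appearing in Theorem 1.2, then yields the stated formula. The most delicate step is the computation of $S^{\mathrm{g}}(v)$: in the unweighted case of Theorem 1.2 the asymptotic follows from the $B$-weight decomposition alone, but the $\mathrm{g}$-weighting requires tracking the $T$-projection simultaneously and identifying the limiting weighted sum with the $\mathrm{g}$-twisted Duistermaat--Heckman integral against $P(x)\,dx$ on $\Delta$.
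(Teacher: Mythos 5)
Your route is the paper's: reduce to the valuation cone $\mathcal{V}$, compute $S^{\mathrm{g}}(v)$ from the multiplicity-free decomposition $H^{0}(X,-rK_{X})\cong\bigoplus_{m\in M\cap r\Delta}V_{2r\rho_{P}+m}$ with Weyl-dimension asymptotics, use $A_{X}=h_{\mathcal{C}}=l_{u}$ on $\mathcal{V}$, and minimize the resulting linear-fractional ratio on each cone $\mathcal{C}_{Y}(X)\cap\mathcal{V}$ at its extremal rays. But two points you pass over are genuinely needed, and they are exactly where the paper does work. First, $\delta_{G}^{\mathrm{g}}$ is by definition an infimum over $T\times G$-invariant divisorial valuations, and $S^{\mathrm{g}}$ is only defined for $T$-invariant filtrations; to replace this index set by $\mathcal{V}\setminus\{0\}$ you must show that every $G$-invariant valuation on $G/H$ is automatically $N_{G}(H)/H$-invariant. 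The paper proves this in a short lemma (for a $B$-eigenfunction $f_{\chi}$ one has $(\gamma\cdot v)(f_{\chi})=v(\theta_{\chi}(\gamma^{-1})f_{\chi})=v(f_{\chi})$, using multiplicity-freeness of $\mathbb{C}(G/H)$); without it you know neither that $S^{\mathrm{g}}(v)$ makes sense for all $v\in\mathcal{V}$ nor that the rays in $E$ lie in the index set of the infimum, so the claimed equality, rather than a one-sided inequality, is not justified.

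Second, your self-declared ``most delicate step'' is stated but not carried out: for the limiting weight to be $\mathrm{g}(\bar{x})$ with $\bar{x}$ the projection of $x\in\Delta$ to $\Delta_{T}^{+}$, you must identify the $T$-weight of the summand $V_{2r\rho_{P}+m}$ as exactly $\bar{m}$. Commutation of $T$ with $G$ plus multiplicity-freeness give that each summand carries a pure $T$-weight, namely that of the $B$-eigenvector $u^{r}\chi^{m}$; but this weight equals $\bar{m}$ only because the anticanonical eigensection $u$ has trivial $T$-weight, which the paper verifies from the construction of $u$ as a wedge of vector fields coming from one-parameter subgroups of $G$ (these commute with $T$). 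If $u$ carried a nontrivial $T$-weight, the weighting in the limit would be $\mathrm{g}$ evaluated at a shifted argument and the barycenter formula would change, so this verification cannot be omitted. (A minor further slip: the multiplicities asymptotic to $P$ come from the Weyl dimension formula applied to the simple summands, not from a $B$-weight-space decomposition; each $B$-highest-weight line occurs with multiplicity one.)
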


Then we can directly get a result about the $G$-equivariantly $\mathrm{g}$-weighted Ding-semistability.

\begin{cor}
Let $(X,-K_{X})$ be a $\mathbb{Q}$-Fano spherical $G$-variety which is an equivariant completion of $G/H$. Let $\Delta$ be the polytope associated to the $G-$action. Let $T = (N_{G}(H)/H)^{0}$ and we have the corresponding polytope $\Delta_{T}^{+}$. Assume $\mathrm{g}$ is a smooth positive function on $\Delta_{T}^{+}$. Then the following statements are equivalent:
    \begin{enumerate}
        \item The $\mathrm{g}-$weighted barycenter $bar_{DH}^{\mathrm{g}}(\Delta)$ is in the dual cone of $-\mathcal{V}_{\mathbb{R}}$.
        \item The pair $(X,-K_{X})$ is G-equivariantly $\mathrm{g}$-weighted Ding-semistable.
    \end{enumerate}
\end{cor}

By using results from \cite{li2021notes,han2023yau} and the invariant $\beta^{\mathrm{g}}$, which is a generalization of the $\beta-$invariant defined in \cite{fujita2019valuative,Li2017k}, we have the following theorem on the existence of $\mathrm{g}-$solitons on a $\mathbb{Q}-$Fano spherical $G-$variety.

\begin{thm}\label{thm1.5}
Let $(X,-K_{X})$ be a $\mathbb{Q}$-Fano spherical $G$-variety which is an equivariant completion of $G/H$. Let $\Delta$ be the polytope associated to the $G-$action. Let $T = (N_{G}(H)/H)^{0}$ and we have the corresponding polytope $\Delta_{T}^{+}$. Assume $\mathrm{g}$ is a smooth positive function on $\Delta_{T}^{+}$. Then the following three statements are equivalent:
    \begin{enumerate}
        \item The $\mathrm{g}-$weighted barycenter $bar_{DH}^{\mathrm{g}}(\Delta)$ is in the relative interior of the dual cone of $-\mathcal{V}_{\mathbb{R}}$.
        \item The pair $(X,-K_{X})$ is G-equivariantly $\mathrm{g}$-weighted Ding-polystable for special test configurations.
        \item There exists a $\mathrm{g}-$soliton on $X$.
    \end{enumerate}
\end{thm}

After completing our article, we got informed of the recent work of Li-Li-Wang \cite{li2022weighted}, where they also prove our Theorem \ref{thm1.5} with a different method. Our approach is based on the valuative theory, and our computation of the $\beta-$invariant is elementary. Li-Li-Wang \cite{li2022weighted} tackles the problem using test configurations. Eventually, the two perspectives coincide on special test configurations. Note that a theory of $K-$stability for $\mathrm{g}-$solitons is also developed in \cite{li2022weighted} which has its own interest, whereas we only discuss Ding-stability based on \cite{li2021notes,han2023yau}.

With the above theorem and a special choice of the function $\mathrm{g}$, we obtain the following application.

\begin{thm}
    Let $X$ be a smooth Fano horospherical variety. Then the total space of the canonical bundle with the zero section removed,
\[
   K_X^\times := K_X \setminus \{0\},
\]
admits a Calabi-Yau cone metric, given by a Sasaki-Einstein structure on a unit circle inside $K_{X}$ with respect to some hermitian metric on $K_{X}$.
\end{thm}

\begin{ac}
    The author would like to express his gratitude to his thesis supervisors Julien Keller and Vestislav Apostolov for valuable suggestions. The author also benefits a lot from discussions with Thibaut Delcroix.
\end{ac}

\section{Invariants for filtrations}\label{section 2}

Let $X$ be a complex normal projective variety with $\mathbb{Q}-$Cartier $K_{X}$. We say that a prime divisor $E$ is over $X$ if there is a birational proper morphism $\mu:Y\rightarrow X$ with normal $Y$ such that $E$ is a prime divisor on $Y$. For such a $E$, the log discrepancy function is given by $A_{X}(E) = \mathrm{ord}_{E}(K_{Y/X}) + 1$, where $K_{Y/X}$ is the relative canonical divisor. We say $X$ is klt or has klt singularities if $A_{X}(E)>0$ for all prime divisors $E$ over $X$.

Let $X$ be a complex normal projective variety with klt singularities. Let $L$ be an ample line bundle on $X$. Let $d_{m} = \mathrm{dim}H^{0}(X,mL)$. Without loss of generality, up to a multiple of $L$, we can assume that $d_{m}\neq 0$ for any $m\in \mathbb{N}$. Let $R = \oplus_{m\geq 0}R_{m}$, where $R_{m} = H^{0}(X,mL)$.

By saying that $\mathcal{F}$ is a filtration of the ring $R$, we mean that for any $\lambda\in \mathbb{R}_{\geq 0}$ and $m\in \mathbb{N}$, there is a vector subspace $\mathcal{F}^{\lambda}R_{m}\subset R_{m}$ satisfying the properties:

\begin{enumerate}
    \item $\mathcal{F}^{\lambda}R_{m}\subset \F^{\lambda'}R_{m}$  for $\lambda\geq \lambda';$
    \item  $\mathcal{F}^{\lambda}R_{m} = \cap_{\lambda'<\lambda}\F^{\lambda'}R_{m};$
    \item $\F^{\lambda_{1}}R_{m_{1}}\cdot \F^{\lambda_{2}}R_{m_{2}} \subset \F^{\lambda_{1}+\lambda_{2}}R_{m_{1}+m_{2}};$
    \item $\F^{0}R_{m} = R_{m}$ and $\F^{\lambda}R_{m} = 0$ for $\lambda>>0.$
\end{enumerate}
The jumping numbers of the filtration $\F$ are defined as follows:
\[0\leq a_{m,1}(\F)\leq a_{m,2}(\F)\leq \cdots \leq a_{m,d_{m}}(\F),\]
where $a_{m,j}(\F) := \mathrm{inf}\{\lambda\in \mathbb{R}_{\geq 0}|\;\mathrm{codim}\F^{\lambda}R_{m}\geq j\}$.

By the second condition in the definition of filtrations, we see that $\F^{\lambda_{1}}R_{m} = \F^{\lambda_{2}}R_{m}$ for $\lambda_{1},\lambda_{2}\in (a_{m,i}(\F),a_{m,i+1}(\F)]$ when $a_{m,i}(\F)\neq a_{m,i+1}(\F)$. For $p\in [1,+\infty)$, the following invariant is defined in \cite[Section 2.2]{zhang2022valuative}:
\begin{equation}\label{def_Sp}
S^{(p)}_{m}(L,\F) := \frac{1}{d_{m}}\sum_{j=1}^{d_{m}}\left(\frac{a_{m,j}}{m}\right)^{p}.
\end{equation}
When $p=1$, this invariant was first introduced in \cite{blum2020thresholds}.

With a $\mathbb{R}$-valued valuation $v$ on $X$, we can define a filtration $\F_{v}$ by setting $\F_{v}^{\lambda}R_{m}:= \{s\in R_{m}|\; v(s)\geq \lambda\}$ for $\lambda \geq 0$ and $m\in \mathbb{N}$. In this case, we write $S^{(p)}_{m}(L,v)$ instead of $S^{(p)}_{m}(L,\F_{v})$. Especially, a prime divisor $F$ over $X$ provides a valuation $\mathrm{ord}_{F}$ which then leads to a filtration. In this case, as in \cite{zhang2022valuative}, we write $S_{m}^{(p)}(L,F)$ instead of $S_{m}^{(p)}(L,\F_{\mathrm{ord}_{F}})$. In this case, $S_{m}^{(p)}(L,F)$ is called the $p$-th moment of the expected vanishing order of $L$ along $F$ at level $m$. A little bit more generally, valuations in the form of $c\cdot \mathrm{ord}_{F}$ with a positive real number $c$ are called divisorial. They similarly provide a filtration. 

Similarly we define (see for example \cite[Section 2.4]{blum2020thresholds})
\begin{equation}\label{Tm}
T_{m}(L,\F):=\frac{a_{m,d_{m}}}{m}.
\end{equation}
The limit $T(L,\F) := \lim_{m\to +\infty}T_{m}(L,\F)$ always exists in $[0,+\infty]$ (see \cite[section 2.4]{blum2020thresholds}). We say that $\F$ is linearly bounded if $T(L,\F)<+\infty$. If $\F_{v}$ is linearly bounded for some valuation $v$, we say that such a valuation $v$ has linear growth.

We recall the following definition of \cite[Section 2.2]{zhang2022valuative}:
\begin{definition}
For a linearly bounded filtration $\F$, we define
\[S^{(p)}(L,\F) := \lim_{m\to +\infty}S_{m}^{(p)}(L,\F).\]
Especially, the above limit exists and is a finite number (proved for $p=1$ in \cite[Corollary 3.6]{blum2020thresholds}, shown for general $p$ in \cite[Proposition 2.5]{zhang2022valuative}). 
\end{definition}

Since $X$ is a complex normal projective variety with klt singularities, as explained in \cite{boucksom2015valuation}, we can define the log discrepancy function for general valuations $$A_{X}:\mathrm{Val}_{X}\rightarrow [0,+\infty]$$
where $\mathrm{Val}_{X}$ is the set of all $\mathbb{R}$-valued valuations on $X$. This function is lower semi-continuous and satisfies $A_{X}(tv) = tA_{X}(v)$ for $\lambda\in \mathbb{R}_{\geq 0}$. What is also important is that $A_{X}(v) = 0$ if and only if $v$ is the trivial valuation. The next lemma is shown in \cite[Lemma 3.1]{blum2020thresholds}.

\begin{lem}
Any $v\in \mathrm{Val}_{X}$ satisfying $A_{X}(v)<+\infty$ has linear growth. Especially, this is the case for a divisorial valuation.
\end{lem}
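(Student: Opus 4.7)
The plan is to bound the top jumping number $a_{m,d_{m}}(\F_{v})$ linearly in $m$, uniformly in $m$. Once $T_{m}(L,v)$ is uniformly bounded, $T(L,v)$ is finite and $v$ is linearly bounded by definition. The concrete target is the inequality $a_{m,d_{m}}(\F_{v}) \leq (A_{X}(v)/\alpha(X,L)) \cdot m$, where $\alpha(X,L)$ is the $\alpha$-invariant recalled in the introduction; its positivity will enter as an external input.

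The main ingredient is the elementary bound
\[v(D) \cdot \mathrm{lct}(X,D) \leq A_{X}(v)\]
valid for any effective $\mathbb{Q}$-Cartier divisor $D$ on $X$ and any valuation $v$ with $A_{X}(v) < +\infty$. This is immediate from the valuative formula for $\mathrm{lct}$ recalled in the introduction, by testing the infimum at $w = v$. For any nonzero section $s \in R_{m}$, the divisor $D_{s} := \frac{1}{m}\mathrm{div}(s)$ is effective and $\mathbb{Q}$-linearly equivalent to $L$, so $\mathrm{lct}(X,D_{s}) \geq \alpha(X,L)$; combining this with the previous inequality and with $v(s) = m\cdot v(D_{s})$ gives
\[\frac{v(s)}{m} \leq \frac{A_{X}(v)}{\mathrm{lct}(X,D_{s})} \leq \frac{A_{X}(v)}{\alpha(X,L)}.\]
Since the right-hand side is independent of $m$ and $s$, it applies in particular to the section realizing $a_{m,d_{m}}(\F_{v})$, yielding the desired uniform bound and hence $T(L,v) \leq A_{X}(v)/\alpha(X,L) < +\infty$.

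For the ``especially'' assertion, a divisorial valuation $v = c\cdot\mathrm{ord}_{F}$ with $c > 0$ and $F$ a prime divisor on a proper birational model $\mu: Y \to X$ satisfies $A_{X}(v) = c\,(1 + \mathrm{ord}_{F}(K_{Y/X}))$, which is finite (and positive by the klt hypothesis), so the first assertion applies. The main obstacle is the input $\alpha(X,L) > 0$ for an ample line bundle on a klt projective variety: this is not addressed in this section but is a standard fact (via Siu-type effective arguments, or via birational boundedness results). An alternative route, closer to the Blum--Jonsson framework referenced in the paper, is to majorize $v$ by a constant times a divisorial valuation $\mathrm{ord}_{F}$ through an Izumi-type inequality on a suitable birational model, and then reduce the linear-growth statement to finiteness of the pseudo-effective threshold of $\mu^{*}L$ along $F$, which is automatic since $\mu^{*}L$ is big.
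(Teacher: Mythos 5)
The paper does not actually prove this lemma; it is quoted from \cite{blum2020thresholds}, where it is established via an Izumi-type comparison (bound $v$ by a multiple of $\mathrm{ord}$ along its center, reduce to a divisorial valuation on a blow-up, and use finiteness of the pseudo-effective threshold of a big class along a divisor) --- i.e.\ precisely the route you relegate to a final ``alternative'' sketch. Your main argument is a different reduction, and each step of it is fine as far as it goes: $\mathrm{lct}(X,D)\cdot v(D)\le A_{X}(v)$ from the valuative formula for the lct, $D_{s}=\frac{1}{m}\mathrm{div}(s)\sim_{\mathbb{Q}}L$ with $v(s)=m\,v(D_{s})$, the top jumping number is $\max_{s\in R_{m}\setminus\{0\}}v(s)$, and the divisorial case is immediate since $A_{X}(c\,\mathrm{ord}_{F})=c(1+\mathrm{ord}_{F}(K_{Y/X}))<+\infty$.

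The genuine issue is the external input $\alpha(X,L)>0$. As you note, it is not proved here, and it is not an innocuous black box in this context: the inequality you derive, $T(L,v)\le A_{X}(v)/\alpha(X,L)$ for every $v$ with $A_{X}(v)<+\infty$, is exactly the \emph{uniform} form of the lemma, and positivity of $\alpha$ for an ample (or big) line bundle on a klt variety is itself one of the main theorems of \cite{blum2020thresholds} --- the very reference the paper cites for this lemma --- where it is proved by the Izumi-type argument above. So your reduction is logically valid (there is no literal circularity, since known proofs of $\alpha>0$ do not invoke this lemma as stated), but it derives the statement from a strictly stronger known result rather than giving an independent proof; to make the write-up self-contained you would either have to prove $\alpha(X,L)>0$ (e.g.\ via lct-versus-multiplicity and Seshadri-type bounds on sections of $mL$, which is real work), or carry out the Izumi/divisorial-majorization route in detail, which is what the cited proof does. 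I would therefore either cite \cite{blum2020thresholds} for $\alpha(X,L)>0$ explicitly and present your argument as a (clean) deduction from it, or promote your final sketch to the actual proof.
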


We define as in \cite{zhang2022valuative} the $\delta^{(p)}-$invariant:

\begin{definition}
For $p\in [1,+\infty)$, we set
\[\delta^{(p)}(L):= \inf_{v\in \mathrm{DivVal}_{X}^{*}}\frac{A_{X}(v)}{S^{(p)}(L,v)^{1/p}},\]
where the first $v$ runs through the set of non-trivial divisorial valuations over $X$, denoted by $\mathrm{DivVal}_{X}^{*}$. 
When $p=1,$ the important $\delta-$invariant was first introduced in \cite[Definition 0.2]{fujita2018k}.
\end{definition}

The classical $\alpha$-invariant looks very similar (see for example \cite[Corollary 4.2]{blum2020thresholds}):

\begin{definition}
We define
\[\alpha(L):= \inf_{v\in \mathrm{DivVal}_{X}^{*}}\frac{A_{X}(v)}{T(L,v)},\]
where the first $v$ runs through all non-trivial divisorial valuations over $X$.
\end{definition}

\begin{remark}
It is shown in \cite{blum2020thresholds,zhang2022valuative} that:
\begin{align*}
\delta^{(p)}(L)&= \inf_{v\in \mathrm{Val}_{X}^{*}}\frac{A_{X}(v)}{S^{(p)}(L,v)^{1/p}},\\
\alpha(L)&= \inf_{v\in \mathrm{Val}_{X}^{*}}\frac{A_{X}(v)}{T(L,v)},
\end{align*}
where $v$ runs through non-trivial valuations with finite log discrepancy.
\end{remark}

\section{Group action}\label{section 3}

As in \cite{golota2020delta}, we can take group actions into consideration. Let $G$ be a complex connected algebraic group. Assume that there is a $G$-action on $X$ and the ample line bundle $L$ is $G$-linearized. In \cite{golota2020delta}, only the case $G\subset \mathrm{Aut}(X,L)$ is considered, but extension to more general group actions does not introduce any essential difference. We can define:

\begin{definition}\label{definition_delta_p}
Let $X$ be a normal projective $G$-variety with klt singularities and $L$ be a $G$-linearized ample line bundle on $X$. For $p\in [1,+\infty)$, we define the $\delta^{(p)}-$invariant with $G-$action
\[\delta_{G}^{(p)}(L):= \inf_{v\in \mathrm{DivVal}_{X}^{G*}}\frac{A_{X}(v)}{S^{(p)}(L,v)^{1/p}}, \]
where $v$ runs through all non-trivial $G$-invariant divisorial valuations over $X$.
\end{definition}

The classical $\alpha_{G}$-invariant has been introduced in the literature (see for example \cite[Definition 2.20]{golota2020delta}):

\begin{definition}
Let $X$ be a normal projective $G$-variety with klt singularities and $L$ a $G$-linearized ample line bundle on $X$, the $\alpha-$invariant with $G-$action is
\[\alpha_{G}(L):= \inf_{v\in \mathrm{DivVal}_{X}^{G*}}\frac{A_{X}(v)}{T(L,v)},\]
where $v$ runs through all non-trivial $G$-invariant divisorial valuations over $X$. 
\end{definition}

\section{Preliminaries on spherical varieties}
Let $G$ be a connected reductive complex algebraic group, $B$ a Borel subgroup of $G$, and $T_{\max}$ a maximal torus of $B$.

\begin{definition}
A normal $G-$variety is spherical if it contains an open $B-$orbit. 
\end{definition}

Let $X$ be a spherical $G-$variety, then it must contain an open $G-$orbit. If we fix a point $x$ in this orbit, we can write $G\cdot x\cong G/H$, where $H$ is the closed subgroup of $G$ which fixes $x$. We call $G/H$ a spherical homogeneous space and $X$ a spherical embedding of $G/H$. We have the spherical lattice:
\[M = \{\chi \text{ a character of } B| b\cdot f_{\chi} = \chi(b)f_{\chi} \text{ for some }f_{\chi} \in \mathbb{C}(X)^{*} = \mathbb{C}(G/H)^{*} \text{ and all } b\in B\}\]
This is a sub-lattice of the character group of $B$. Note that $f_{\chi}$ is unique up to multiplication by $\mathbb{C}^{*}$. For convenience, we identify $\chi$ with $f_{\chi}$ in this paper whenever no confusion can arise. So we can view $\chi$ as a rational function on $G/H$. Let $N = \mathrm{Hom}_{\mathbb{Z}}(M,\mathbb{Z})$ be the dual lattice. 

For a valuation $v\in \mathrm{Val}_{X}$, we can define $\langle v, \chi \rangle = v(f_{\chi})$. In this way we see that there is a map $\rho:\mathrm{Val}_{X}\rightarrow N\otimes \mathbb{R}$. Let $\mathrm{QVal}^{G}_{X}$ be the set of $G$-invariant $\mathbb{Q}$-valued valuations on $X$ or equivalently on $G/H$. The map $\rho$ identifies the set $\mathrm{QVal}_{X}^{G}$ with a polyhedral convex cone $\mathcal{V}\subset N\otimes \mathbb{Q}$ (originally from \cite[Section 4.1]{brion1987valuations}, see also \cite[Section 3.1 and  Section 4.1]{brion1997varietes}). We always treat $\mathrm{QVal}_{X}^{G}$ as a subset of $N\otimes \mathbb{Q}$. We denote the closure of $\mathcal{V}$ inside $N\otimes \mathbb{R}$ as $\mathcal{V}_{\mathbb{R}}$.

Let $D(X)$ be the set of $B$-stable prime divisors in $X$. For a $G$-orbit $Y$ in $X$, we let $D_{Y}(X)\subset D(X)$ be the set of $B$-stable prime divisors which contain $Y$. Let $G_{Y}(X) =\{D\in D_{Y}(X)| D \text{ is }G-\text{invariant}\}$ and $\mathcal{F}_{Y}(X) =\{D\in D_{Y}(X)| D \text{ is not }G-\text{invariant}\}$. Then we let $\mathcal{C}_{Y}(X)$ be the cone inside $N\otimes \mathbb{Q}$ generated by $\rho(\mathcal{F}_{Y}(X))$ and $\rho(G_{Y}(X))$. The pair $(\mathcal{C}_{Y}(X),\mathcal{F}_{Y}(X))$ is called a colored cone. Let $\mathbf{F}(X)$ be the set of all the colored cones $(\mathcal{C}_{Y}(X),\mathcal{F}_{Y}(X))$, where $Y$ goes through all the $G-$orbits inside $X$, then $\mathbf{F}(X)$ is called a colored fan. 

\begin{remark}\label{rm3.1}
We can see that every element in $\mathrm{QVal}_{X}^{G}$ is divisorial. Actually for $v\in \mathcal{V}$, after sufficiently subdividing the cones of $\mathcal{C}_{X}$ and taking out colors, we can always find a toroidal $G$-equivariant resolution $Z$ of which $v$ is on an extremal ray of some colored cone in the corresponding colored fan. Then $v$ must be divisorial.
On the other hand, every $G$-invariant divisorial valuation is clearly proportional to some element in $\mathcal{V}$. As changing valuations by a positive constant doesn't modify ratios like $\frac{A_{X}(v)}{S^{(p)}(L,v)^{1/p}}$. We can just think of $\mathcal{V}$ as $\mathrm{DivVal}_{X}^{G}$. See more details in \cite[Remark 3.5]{pasquier2017survey} which is based on \cite{cox2000toric}. 
\end{remark}

We now turn to the combinatorial counterparts of the notions of colored cones and colored fans.

\begin{definition}
A \emph{colored cone} in $N \otimes \mathbb{Q}$ is a pair $(\mathcal{C},\mathcal{F})$, 
where $\mathcal{C} \subseteq N \otimes \mathbb{Q}$ and $\mathcal{F} \subseteq D(G/H)$, 
with $D(G/H)$ denoting the set of $B$-stable prime divisors of $G/H$, such that:
\begin{enumerate}
    \item the set $\mathcal{C}$ is a strictly convex polyhedral cone generated 
    by $\rho(\mathcal{F})$ and a finite number of elements in $\mathcal{V}$;
    \item the relative interior of $\mathcal{C}$ intersects $\mathcal{V}$;
    \item we have $0 \notin \rho(\mathcal{D})$.
\end{enumerate}
\end{definition}

\begin{definition}
A \emph{face} of a colored cone $(\mathcal{C},\mathcal{F})$ is a colored cone $(\mathcal{C}',\mathcal{F}')$ 
where $\mathcal{C}'$ is a face of $\mathcal{C}$ and 
\[
   \mathcal{F}' = \mathcal{F} \cap \rho^{-1}(\mathcal{C}').
\]
\end{definition}

\begin{definition}
A \emph{colored fan} in $N\otimes \mathbb{Q}$ is a collection $\mathbf{F}$ of colored cones such that:
\begin{enumerate}
    \item any face of a colored cone in $\mathbf{F}$ is still in $\mathbf{F}$;
    \item the relative interiors of the colored cones in $\mathbf{F}$ do not intersect.
\end{enumerate}
Given a spherical embedding $(X,x)$ of $G/H$, we have already defined its colored fan:
\[
   \mathbf{F}(X)  = \{(\mathcal{C}_{Y}(X),\mathcal{F}_{Y}(X))
   \text{ for any $G$-orbit $Y$ of $X$}\}.
\]
\end{definition}

The classification of spherical varieties is provided by the Luna–Vust theory \cite{luna1983plongements}.
\begin{thm}
The map $(X,x) \mapsto \mathbf{F}(X)$ induces a bijection between spherical
embeddings of $G/H$ and colored fans in $N\otimes \mathbb{Q}$.
\end{thm}

\section{Valuative invariants for linearized line bundles on a spherical variety}\label{section 4}
Note that $X$ is complete if and only if $\mathcal{V}\subset \bigcup_{(\mathcal{C}_{Y}(X),\mathcal{F}_{Y}(X))\in \mathbf{F}(X)}\mathcal{C}_{Y}(X)=: \mathcal{C}_{X}$ (see for example \cite[Corollary 12.14]{timashev2011homogeneous}). From now on, we always assume that $X$ is a projective spherical $G-$variety. 

In this section, we focus on a linearized line bundle on a spherical variety. 
We now start the procedure of computing the valuative invariants. The method here is highly influenced by \cite[Section 7]{blum2020thresholds} on toric line bundles.

Now let $L$ be an ample $G$-linearized line bundle on $X$. We assume that $s\in H^{0}(X,L)$ is a $B$-eigenvector with weight $\chi$. We can write:

\[div(s) = \sum_{D\in D(X)}n_{D}D.\]

This divisor provides a piecewise linear function $l_{s}$ (see \cite[Section 3]{brion1989groupe}) on $\mathcal{C}_{X}$ which is linear on each cone $\mathcal{C}_{Y}(X)$. We can explicitly describe the function as follows. For each $G$-orbit $Y$, We let $\chi_{s,Y}$ be an element in $M$ such that $\langle \rho(D),\chi_{s,Y}\rangle  = n_{D} $ for any $D\in D_{Y}(X)$. Then we let $l_{s}$ be the function given by $\chi_{s,Y}$ on $\mathcal{C}_{Y}(X)$. 

For $v\in \mathrm{QVal}_{X}^{G}$, it is important to know the value of $v(s)$. The following classic lemma is included here for readers' convenience.

\begin{lem}
For $v\in \mathrm{QVal}_{X}^{G}$, we have $v(s) = l_{s}(v)$.
\end{lem}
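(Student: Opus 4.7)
The plan is to reduce the equality to a cone-by-cone computation in the colored fan, and to prove it first on the relative interior of each cone $\mathcal{C}_Y(X)$. Fix $v \in \mathcal{V}$, and let $\mathcal{C}_Y(X)$ be a cone containing $v$ in its relative interior. From the orbit-cone correspondence for spherical embeddings, the center of $v$ on $X$ is exactly the orbit closure $\overline{Y}$; in particular, the generic point of $\overline{Y}$ lies in the center of $v$. On $\mathcal{C}_Y(X)$, the function $l_s$ is by definition $\langle \cdot, \chi_{s,Y}\rangle$, so the goal on this cone is $v(s)=\langle v,\chi_{s,Y}\rangle$.

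The main step is to introduce the auxiliary rational section $\tilde{s} := f_{\chi_{s,Y}}^{-1}\cdot s$ of $L$, where $f_{\chi_{s,Y}} \in \mathbb{C}(X)^*$ is the $B$-eigenfunction of weight $\chi_{s,Y}$ (unique up to scalar). Since $f_{\chi_{s,Y}}$ is a $B$-eigenfunction, its principal divisor is supported on $B$-stable prime divisors and satisfies $\mathrm{ord}_D(f_{\chi_{s,Y}})=\langle\rho(D),\chi_{s,Y}\rangle$ for every $D\in D(X)$. Likewise $\mathrm{div}(s)=\sum_{D\in D(X)} n_D D$, since $s$ is itself a $B$-eigenvector. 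Therefore
\[
\mathrm{div}(\tilde s)\;=\;\sum_{D\in D(X)}\bigl(n_D-\langle\rho(D),\chi_{s,Y}\rangle\bigr)\,D,
\]
and by the defining property of $\chi_{s,Y}$, every coefficient of a divisor $D\in D_Y(X)$ vanishes. Consequently $\mathrm{div}(\tilde s)$ is supported on $B$-stable prime divisors that do \emph{not} contain $\overline Y$, which means that $\tilde s$ is a local frame for $L$ at the generic point of $\overline Y$. Using $\tilde s$ as the local trivialization at the center of $v$, one has $v(\tilde s)=0$, and the relation $s = f_{\chi_{s,Y}}\cdot \tilde s$ gives
\[
v(s)\;=\;v(f_{\chi_{s,Y}})+v(\tilde s)\;=\;\langle v,\chi_{s,Y}\rangle\;=\;l_s(v).
\]

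Finally, for $v$ lying on the boundary of $\mathcal{C}_Y(X)$, I note that $v$ lies in the relative interior of some face $\mathcal{C}_{Y'}(X)$ (corresponding to a bigger orbit $Y'$ whose closure contains $\overline Y$), and I apply the same argument to $Y'$. The two characters $\chi_{s,Y}$ and $\chi_{s,Y'}$ agree on $\mathcal{C}_{Y'}(X)$ because both satisfy the same equations $\langle\rho(D),\cdot\rangle=n_D$ for $D\in D_{Y'}(X)\subset D_Y(X)$, which generate $\mathcal{C}_{Y'}(X)$; so $l_s$ is well-defined and the formula propagates continuously. The main conceptual obstacle is not the divisor bookkeeping itself, which is forced by the $B$-eigenvector structure, but rather checking that $v(\tilde s)=0$ for a rational section $\tilde s$ that becomes a local frame at the center of $v$—this uses in an essential way the fact (from the spherical theory) that a $G$-invariant valuation lying in the interior of $\mathcal{C}_Y(X)$ has center precisely $\overline Y$, so that a trivialization of $L$ at the generic point of $\overline Y$ is indeed a valid local frame at the center of $v$.
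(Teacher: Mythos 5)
Your proof is correct and takes essentially the same route as the paper's: both identify the center of $v$ with $\overline{Y}$ via the colored-cone/orbit correspondence and observe that $s/f_{\chi_{s,Y}}$ is a regular nonvanishing section of $L$ near the generic point of $\overline{Y}$, so that $v(s)=v(f_{\chi_{s,Y}})=\langle \chi_{s,Y},v\rangle=l_{s}(v)$. The only cosmetic difference is that the paper trivializes $L$ on the canonical $B$-chart $X_{Y,B}$ while you work on the complement of $\mathrm{Supp}(\mathrm{div}(\tilde s))$, and your closing paragraph on faces merely re-verifies the well-definedness of $l_{s}$, which the paper takes for granted.
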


\begin{proof}
We know that $v$ is in the relative interior of a unique colored cone $(\mathcal{C}_{Y}(X),\mathcal{F}_{Y}(X))$. Then the center of $v$ is $\bar{Y}$ (\cite[Theorem 2.5]{knop1991luna}). We consider the set $X_{Y,B} = \{x\in X|Y\subset \overline{Bx}\}$. This is an open affine subset of $X$ (\cite[Proposition 2.2.1]{brion1997varietes}). We know that $\bar{Y}$ is a spherical variety (\cite[Corollary 2.3.1]{brion1997varietes}), so it must contain an open dense $B$-orbit. Especially, $Y\cap X_{Y,B}\neq \emptyset$. This shows that the valuation $v$ has a center on $X_{Y,B}$. From this it is also clear that $D_{Y}(X) = \{D\in D(X)|D\cap X_{Y,B} \neq \emptyset\}$. Now, we can write
\begin{align*}
    div(s)|_{X_{Y,B}} &= \sum_{D\in D_{Y}(X)}n_{D}D,\\
                      &= \sum_{D\in D_{Y}(X)}\langle \chi_{s,Y}, \rho(v_{D})D,\\
                      &= div(\chi_{s,Y})|_{X_{Y,B}}.
\end{align*}

Thus on $X_{Y,B}$, we have $s = \chi_{s,Y}e$, where $e$ is a local trivialization of $L$. Thus $v(s) = \langle \chi_{s,Y}, v\rangle$. This is the same as saying $v(s) = l_{s}(v)$.
\end{proof}

Next we recall some classical results on linearized line bundles over spherical varieties. The original references are \cite{brion1987image,brion1989groupe}.
A lot of the results are mentioned in \cite[Section 3]{delcroix2023uniform} and we shall use the same notations.

There is a canonical polytope related to an ample $G$-linearized line bundle which is called the moment polytope:

\begin{definition}\label{moment polytope}
Let $\mathcal{X}(B)$ be the character group of $B$. Let $\Delta^{+}_{k}$ be the subset of $\mathcal{X}(B)$ such that
\[H^{0}(X,kL)\cong \bigoplus_{\lambda\in \Delta^{+}_{k}} V_{\lambda},\]
where $V_{\lambda}$ corresponds to the simple $G$-module with highest weight $\lambda$. Note that $H^{0}(X,kL)$ is multiplicity-free (\cite[Theorem 2.1.1]{brion1997varietes}), meaning that each simple $G$-module can at most appear once in the decomposition.
Then $\bigcup_{r\in \mathbb{N}^{*}}\frac{\Delta^{+}_{k}}{k}$ is a rational polytope inside $\mathcal{X}(B)\otimes \mathbb{Q}$ (\cite[Proposition 1.2.3]{brion1997varietes}). We define the moment polytope $$\Delta^{+} = \overline{\bigcup_{r\in \mathbb{N}^{*}}\frac{\Delta^{+}_{k}}{k}},$$ where the closure is taken in $\mathcal{X}(B)\otimes \mathbb{R}$.
\end{definition}

The moment polytope is always contained in the closed positive Weyl chamber. There is another very useful polytope $\Delta$ in $M\otimes \mathbb{R}$ related to the section $s$:
\[\Delta = \{m\in M\otimes \mathbb{R}| \rho(D)(m) + n_{D}\geq 0 \text{ for all } D\in D(X)\}\]
We have the relation $\Delta^{+} = \Delta + \chi$ when we consider $M\otimes \mathbb{R}$ as a subspace of $\mathcal{X}(B)\otimes \mathbb{R}$.

We have an isomorphism of $G$-modules:

\[H^{0}(X,kL) \cong \bigoplus_{m\in M\cap k\Delta}V_{k\chi + m} .\]
Let $R^{+}$ be the positive root system of $(G,B,T_{max})$ and $\rho$ the half of the sum of positive roots. Then we have the Weyl formula
\begin{equation}\label{Weyl_formula}
dim(V_{\lambda}) = \prod_{\alpha \in R^{+}} \frac{\langle\lambda + \rho,\alpha\rangle}{\langle\rho,\alpha\rangle},
\end{equation}
where $\langle \beta, \alpha \rangle$ represents the pairing of $\beta$ with the coroot corresponding to $\alpha$.

Now we can talk about filtrations of $H^{0}(X,kL)$ induced by a valuation $v\in \mathrm{Val}_{X}^{G}$. We denote again $H^{0}(X,kL)$ as $R_{k}$. Because $v$ is $G$-invariant, the set $\F_{v}^{\lambda}R_{k} = \{u\in H^{0}(X,kL)|v(u)\geq \lambda\}$ is a $G$-module. Now $\F_{v}^{\lambda}R_{k}$ must be the direct sum of some simple $G$-module $V_{k\chi + m}$. 

Look at each $V_{k\chi + m}$ individually. It contains $s^{k}\chi^{m}$, where $\chi^{m}$ is a $B$-eigenfunction associated to the lattice point $m$. Since $V_{k\chi + m}$ is a simple $G$-module, we can conclude that all the non-zero elements in $V_{k\chi + m}$ have the same value when paired with $v$, and that is $kl_{s}(v) + \langle m,v\rangle$. 

Hence we have

\begin{equation}\label{filtration}
\mathcal{F}_{v}^{\lambda}R_{k} \cong \bigoplus_{\substack{m\in M\cap k\Delta\\ \langle m, v\rangle + kl_{s}(v)\geq \lambda}}V_{k\chi + m}.
\end{equation}

The following lemma follows from definitions \eqref{def_Sp} and \eqref{Tm}, the Weyl formula \eqref{Weyl_formula} and the filtration \eqref{filtration}.
\begin{lem}
For $v\in \mathrm{QVal}^{G}_{X}$, we have

\begin{align*}
    S^{(p)}_{k}(L,v) &=\frac{1}{\mathrm{dim}H^{0}(X,kL)}\sum_{m\in M\cap k\Delta}\mathrm{dim}V_{k\chi + m} \left(\frac{\langle m,v\rangle + kl_{s}(v)}{k}\right)^{p},\\
    &=\frac{\sum_{m\in M\cap k\Delta}\left(\frac{\langle m,v\rangle + kl_{s}(v)}{k}\right)^{p}\prod_{\alpha\in R^{+}}\frac{\langle k\chi+m+\rho,\alpha\rangle}{\langle \rho,\alpha\rangle}}{\sum_{m\in M\cap k\Delta}\prod_{\alpha\in R^{+}}\frac{\langle k\chi+m+\rho,\alpha\rangle}{\langle \rho,\alpha\rangle}};\\
    T_{k}(L,v) &= \frac{\max_{m\in M\cap k\Delta}(\langle m,v\rangle + kl_{s}(v))}{k}.
\end{align*}
\end{lem}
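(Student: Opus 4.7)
The plan is to read off the jumping numbers of the filtration $\mathcal{F}_v$ directly from the explicit description of $\mathcal{F}_v^\lambda R_k$ established in the paragraph immediately preceding the lemma, and then substitute into the definitions of $S^{(p)}_k(L,v)$ and $T_k(L,v)$. No further geometric input is needed beyond what has already been set up; the rest is formal counting.

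For each $m \in M\cap k\Delta$ set $\mu_m := \langle m,v\rangle + k l_s(v)$. The identification
\[\mathcal{F}_v^\lambda R_k \;\cong\; \bigoplus_{\substack{m\in M\cap k\Delta \\ \mu_m \geq \lambda}} V_{k\chi+m}\]
shows that as $\lambda$ crosses $\mu_m$ from below, the codimension of $\mathcal{F}_v^\lambda R_k$ inside $R_k$ jumps by exactly $\dim V_{k\chi+m}$. Consequently the multiset of jumping numbers $\{a_{k,1}(\mathcal{F}_v),\ldots,a_{k,d_k}(\mathcal{F}_v)\}$ is precisely the multiset of values $\mu_m$, $m \in M \cap k\Delta$, in which each $\mu_m$ occurs with multiplicity $\dim V_{k\chi+m}$.

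Substituting this into the definition $S^{(p)}_k(L,v)=\tfrac{1}{d_k}\sum_{j=1}^{d_k}(a_{k,j}/k)^p$ collapses the sum over $j$ into a sum over lattice points $m$, giving the first equality. Applying the Weyl dimension formula to $\dim V_{k\chi+m}$ in both the numerator and in the denominator $d_k = \sum_{m\in M\cap k\Delta}\dim V_{k\chi+m}$ produces the second equality. Finally, by definition $T_k(L,v)=a_{k,d_k}/k$, and Step~1 identifies this largest jumping number with $\max_{m\in M\cap k\Delta}\mu_m / k$, which is the third formula.

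The only genuine content in the argument is the identification of $\mathcal{F}_v^\lambda R_k$ as the direct sum of those isotypic factors $V_{k\chi+m}$ with $\mu_m \geq \lambda$; once this is granted, everything else is purely combinatorial bookkeeping. This identification rests on the observation that a $G$-invariant valuation takes the same value on every nonzero vector of a simple $G$-submodule (so the filtration is compatible with the isotypic decomposition), which was justified in the text just before the lemma. Thus I do not anticipate any serious obstacle.
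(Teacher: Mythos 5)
Your proposal is correct and follows essentially the same route as the paper: the paper derives the isotypic description $\mathcal{F}_{v}^{\lambda}R_{k} \cong \bigoplus_{\mu_m \geq \lambda} V_{k\chi+m}$ in the paragraph before the lemma and then treats the formulas as an immediate consequence of the definitions of the jumping numbers together with the Weyl dimension formula, which is exactly the bookkeeping you carry out. Your explicit identification of the multiset of jumping numbers (each $\mu_m$ with multiplicity $\dim V_{k\chi+m}$) is the right way to make that deduction precise.
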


Taking limit as $k\rightarrow \infty$, we obtain the following result.

\begin{lem}
For $v\in \mathrm{QVal}^{G}_{X}$, we have:
\begin{align*}
S^{(p)}(L,v) &= \frac{\int_{\Delta}P(x)(\langle x,v\rangle+l_{s}(v))^{p}dx}{\int_{\Delta}P(x)dx},\\
T(L,v)  &= \max_{m\in \Delta}(\langle m,v\rangle + l_{s}(v)),
\end{align*}
where $P(x) = \prod_{\alpha\in R_{X}^{+}}\frac{\langle x+\chi,\alpha \rangle}{\langle \rho, \alpha \rangle}$ and $R_{X}^{+}$ is the set of positive roots not orthogonal to $\Delta^{+} = \chi +\Delta$. 
\end{lem}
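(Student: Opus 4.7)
The plan is to pass from the discrete lattice expressions obtained in the previous lemma to their continuum limits, by treating both $S^{(p)}_{k}(L,v)$ and $T_{k}(L,v)$ as Riemann-type quantities on the polytope $\Delta$ and letting $k\to\infty$.

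First I would analyze the asymptotics of the Weyl dimension factor $\prod_{\alpha \in R^{+}} \frac{\langle k\chi + m + \rho, \alpha\rangle}{\langle \rho, \alpha\rangle}$, separating the product according to whether $\alpha$ is orthogonal to $\Delta^{+} = \chi + \Delta$ or not. For $\alpha \in R^{+}$ orthogonal to $\Delta^{+}$ and for any $m \in M \cap k\Delta$, we have $\langle k\chi + m, \alpha\rangle = 0$, so the corresponding factor collapses to $\langle \rho,\alpha\rangle/\langle \rho,\alpha\rangle = 1$. For $\alpha \in R_{X}^{+}$, writing $x = m/k \in \Delta$, the factor equals $k \cdot \frac{\langle \chi + x, \alpha\rangle + \langle \rho,\alpha\rangle/k}{\langle \rho,\alpha\rangle}$, which converges uniformly on $\Delta$ to $k \cdot \frac{\langle \chi + x, \alpha\rangle}{\langle \rho,\alpha\rangle}$. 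Multiplying over $R_{X}^{+}$, the full Weyl factor behaves like $k^{|R_{X}^{+}|}\, P(x)\,(1 + O(1/k))$ uniformly for $x$ in the compact polytope $\Delta$.

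Next, substituting $x = m/k$ and extracting the common factor $k^{|R_{X}^{+}|}$ from both numerator and denominator of $S^{(p)}_{k}(L,v)$, each becomes a sum of the form $\sum_{m \in M \cap k\Delta} f(m/k)$ for $f$ continuous on $\Delta$. Dividing further by $k^{d}$, where $d = \dim \Delta$, and using Lebesgue measure on the affine span of $\Delta$ normalized so that $M$ has covolume one, the standard convergence of Riemann sums for continuous integrands on a convex polytope gives $\frac{1}{k^{d}}\sum_{m \in M \cap k\Delta} f(m/k) \to \int_{\Delta} f(x)\,dx$. Applied with $f(x) = P(x)$ in the denominator and $f(x) = (\langle x,v\rangle + l_{s}(v))^{p}\,P(x)$ in the numerator, the common $k^{d + |R_{X}^{+}|}$ factors cancel in the ratio and yield the stated formula for $S^{(p)}(L,v)$. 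For $T_{k}(L,v)$, the function $x \mapsto \langle x,v\rangle + l_{s}(v)$ is continuous on the compact $\Delta$, and the rescaled lattice $\frac{1}{k}(M \cap k\Delta)$ becomes dense in $\Delta$; hence the discrete maximum converges to the supremum over $\Delta$.

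The main technical subtlety, more a matter of careful bookkeeping than a real obstacle, is handling the case where $\Delta$ is not full-dimensional inside $M \otimes \mathbb{R}$ so that the correct Lebesgue measure on the affine span has to be chosen consistently, together with checking that the splitting of $R^{+}$ according to orthogonality with $\Delta^{+}$ cleanly isolates exactly the polynomial $P(x)$. Once these points are pinned down, the rest is a direct asymptotic expansion of the previous lemma.
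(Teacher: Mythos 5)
Your argument is correct and is essentially the proof the paper has in mind: the paper simply says ``taking limit'' after the discrete formulas, and your Weyl-factor asymptotics (splitting $R^{+}$ by orthogonality to $\Delta^{+}$) followed by Riemann-sum convergence over the rational polytope $\Delta$ is exactly the standard way to justify that limit. The only cosmetic point is that the uniform statement is better phrased additively, $k^{-\#R_{X}^{+}}\dim V_{k\chi+m}\to P(m/k)$ uniformly on $\Delta$, rather than as a multiplicative $(1+O(1/k))$, since $P$ may vanish on faces of $\Delta$; this does not affect the conclusion.
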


Particularly for $p=1$, this yields

\begin{lem}
Let us denote $\frac{\int_{\Delta}P(x)xdx}{\int_{\Delta}P(x)dx}$ as $\mathrm{bar}_{\mathrm{DH}}(\Delta)$. For $v\in \mathrm{QVal}^{G}_{X}$, we have
\begin{align*}
S(L,v) &= \frac{\int_{\Delta}P(x)(\langle x,v\rangle+l_{s}(v))dx}{\int_{\Delta}P(x)dx}, \\
&= \langle \mathrm{bar}_{\mathrm{DH}}(\Delta),v\rangle + l_{s}(v),
\end{align*}
where $P(x) = \prod_{\alpha\in R_{X}^{+}}\frac{( x+\chi,\alpha )}{( \rho, \alpha )}$ and $R_{X}^{+}$ is the set of positive roots not orthogonal to $\Delta^{+} = \chi +\Delta$.  
\end{lem}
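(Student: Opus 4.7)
The statement is the $p=1$ specialization of the preceding lemma combined with a bookkeeping observation, so the proof plan is short. The first equality is simply the previous lemma with $p=1$, where the exponent disappears; nothing additional needs to be proved. The bulk of the work is recasting the numerator of that expression in the form $\langle \mathrm{bar}_{\mathrm{DH}}(\Delta),v\rangle + l_{s}(v)$.

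My plan is to split the integrand $\langle x,v\rangle + l_{s}(v)$ into two pieces and integrate term by term. The term $l_{s}(v)$ is independent of $x$, so it factors out of the numerator and the ratio $\frac{\int_{\Delta} P(x)\,dx}{\int_{\Delta} P(x)\,dx}$ collapses to $1$, leaving the summand $l_{s}(v)$. For the other term, I use the linearity of the pairing $x\mapsto \langle x,v\rangle$ to pull $v$ outside the vector-valued integral, giving
\begin{equation*}
\frac{\int_{\Delta} P(x)\langle x,v\rangle\,dx}{\int_{\Delta} P(x)\,dx}
= \left\langle \frac{\int_{\Delta} P(x)\,x\,dx}{\int_{\Delta} P(x)\,dx},\, v\right\rangle
= \langle \mathrm{bar}_{\mathrm{DH}}(\Delta),v\rangle,
\end{equation*}
by the very definition of $\mathrm{bar}_{\mathrm{DH}}(\Delta)$.

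There is no substantive obstacle here: the only thing to verify is that the integrals are well defined, which is automatic since $\Delta$ is a bounded polytope in $M\otimes \mathbb{R}$ and $P(x)$ is a polynomial (hence bounded on $\Delta$), so both $P(x)$ and $P(x)\,x$ are integrable on $\Delta$. Summing the two pieces yields the claimed identity.
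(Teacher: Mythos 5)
Your proposal is correct and matches the paper's own (implicit) derivation: the paper presents this lemma as the immediate $p=1$ case of the preceding formula for $S^{(p)}(L,v)$, with the second equality following from linearity of the pairing and the definition of $\mathrm{bar}_{\mathrm{DH}}(\Delta)$, exactly as you argue. Your added remark on integrability of $P(x)$ and $P(x)\,x$ over the bounded polytope $\Delta$ is a harmless but fine verification.
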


From now on, we assume that $K_{X}$ is a $\mathbb{Q}$-Cartier divisor. This actually implies that $X$ has klt singularities when $X$ is spherical (see \cite[Proposition 5.6]{pasquier2017survey}).

From \cite[Section 4]{gagliardi2015gorenstein}, which is based upon \cite{brion1997curves} (see also \cite[Section 3.2]{delcroix2020k}), there is a section $u$ of $-K_{X}$ which is a $B$-eigenvector with weight $2\rho_{P} = \sum_{\alpha\in \Phi_{R_{u}(P)}}\alpha$. Here $P$ is the stabilizer of the open $B-$orbit, $R_{u}(P)$ is the unipotent radical of $P$, and then $\Phi_{R_{u}(P)}$ is the set of roots of $R_{u}(P)$. We have

\[div(u) = \sum_{D\text{ is }G-\text{stable}}D + \sum_{D\in D(G/H)}a_{D}D,\]
where the notation $D(G/H)$ refers to the set of $B$-stable prime divisors of $G/H$.

\begin{remark}
More precisely, after replacing $u$ by a suitable tensor power $u^{\otimes k}$, 
it becomes a section of some line bundle. 
Equivalently, for some positive integer $k$, the divisor $k\mathrm{div}(u)$ is Cartier.
\end{remark}

Similar to the construction of the piecewise-linear function $l_{s}$ related to a section $s$, the assumption that $K_{X}$ is a $\mathbb{Q}$-Cartier divisor is equivalent to the assumption that there is a piecewise linear function $h_{\mathcal{C}} := l_{u}$ on $\mathcal{C}_{X} = \bigcup_{(\mathcal{C}_{Y}(X),\mathcal{F}_{Y}(X))\in \mathbf{F}(X)}\mathcal{C}_{Y}(X)$, linear on each cone $\mathcal{C}_{Y}(X)$ given by some $m\in M\otimes \mathbb{Q}$. It is a very important fact that this function $h_{\mathcal{C}}$ is exactly the log discrepancy function $A_{X}$ when restricted on $\mathcal{V}$ (see \cite[Section 5]{pasquier2017survey}). Note again that $\mathcal{V}$ is a subset of $\mathcal{C}_{X}$.

The polyhedral cone $\mathcal{V}$ is carved into finitely many smaller polyhedral cones $\mathcal{C}_{Y}(X)\cap \mathcal{V}$. Let's gather the primitive generators of all the extremal rays of all the cones $\mathcal{C}_{Y}(X)\cap \mathcal{V}$ in a finite set $E$. Then we have:
\begin{thm}\label{thm 3.1}
Let $G$ be a connected reductive algebraic group and $X$ a spherical $G-$variety with $\mathbb{Q}$-Cartier $K_{X}$. Let $L$ be a $G$-linearized ample line bundle on $X$. We have
\begin{align*}
 \delta^{(p)}_{G}(X,L) &= \min_{v\in E} \frac{h_{\mathcal{C}}(v)}{\left(\frac{\int_{\Delta}P(x)(\langle x,v\rangle +l_{s}(v))^{p}dx}{\int_{\Delta}P(x)dx}\right)^{1/p}},\\
\alpha_{G}(X,L) &= \min_{v\in E} \min_{m\in \Delta}\frac{h_{\mathcal{C}}(v)}{\langle m,v\rangle + l_{s}(v)}.
\end{align*}
\end{thm}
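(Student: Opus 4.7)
The plan is to reduce the infima defining $\delta_G^{(p)}(X,L)$ and $\alpha_G(X,L)$ to minima over the finite set $E$, by exploiting the piecewise linearity of $h_{\mathcal{C}}$ and $l_s$ on $\mathcal{V}$ together with the integral formulas for $S^{(p)}(L,v)$ and $T(L,v)$ established in the preceding lemmas. By Remark \ref{rm3.1}, every non-trivial $G$-invariant divisorial valuation is positively proportional to an element of $\mathcal{V}\setminus\{0\}$, and since the ratios $A_X(v)/S^{(p)}(L,v)^{1/p}$ and $A_X(v)/T(L,v)$ are scale invariant, both infima can be taken over $v\in\mathcal{V}\setminus\{0\}$, where $A_X(v)=h_{\mathcal{C}}(v)$.

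For the $\alpha_G$ formula, I would use $T(L,v)=\max_{m\in\Delta}(\langle m,v\rangle+l_s(v))$ to rewrite
\[
\frac{h_{\mathcal{C}}(v)}{T(L,v)} \;=\; \min_{m\in\Delta}\frac{h_{\mathcal{C}}(v)}{\langle m,v\rangle+l_s(v)},
\]
and swap the two minimizations. For each fixed $m$ and each colored cone $\mathcal{C}_Y(X)\cap\mathcal{V}$, the ratio is a quotient of two linear functions, so by the elementary inequality $\sum_i t_i a_i / \sum_i t_i b_i \geq \min_i a_i/b_i$ (valid for $b_i>0$) its minimum on that cone is attained at an extremal ray. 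Ranging over the finitely many cones produces the formula on $E$; the minimum over $m$ is effectively attained at a vertex of $\Delta$ where the denominator is positive.

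The main obstacle is the $\delta_G^{(p)}$ case, because $v\mapsto S^{(p)}(L,v)^{1/p}$ is not linear on each cone. I would resolve this via Minkowski's inequality applied to the probability measure $P(x)\,dx/\int_{\Delta}P$ on $\Delta$ (noting $p\geq 1$). If $v=\sum_i t_i v_i$ is a non-negative combination of the extremal-ray generators $v_i$ of a cone $\mathcal{C}_Y(X)\cap\mathcal{V}$, then on this cone one has $\langle x,v\rangle+l_s(v)=\sum_i t_i(\langle x,v_i\rangle+l_s(v_i))$, and each summand is non-negative on $\Delta$ since $\langle m,v_i\rangle+l_s(v_i)$ equals the value of $v_i$ on the section $s\chi^m$. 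Minkowski then yields
\[
S^{(p)}(L,v)^{1/p} \;\leq\; \sum_i t_i\, S^{(p)}(L,v_i)^{1/p},
\]
and combined with the linear identity $h_{\mathcal{C}}(v)=\sum_i t_i h_{\mathcal{C}}(v_i)$ and the same ratio inequality, this shows the infimum of $h_{\mathcal{C}}/S^{(p)\,1/p}$ on each cone is attained at an extremal ray. Positivity of $S^{(p)}(L,v_i)$ for $v_i\in E$ follows from $\Delta$ having non-empty interior (by ampleness of $L$), so the resulting minimum lies in $(0,+\infty)$, completing the reduction to the finite set $E$.
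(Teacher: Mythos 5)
Your proposal is correct and follows essentially the same route as the paper: reduce to $v\in\mathcal{V}\setminus\{0\}$ via Remark \ref{rm3.1}, decompose $\mathcal{V}$ into the cones $\mathcal{C}_{Y}(X)\cap\mathcal{V}$, write $v$ as a nonnegative combination of extremal generators, and use the $L^{p}$ triangle (Minkowski) inequality for $S^{(p)}(L,\cdot)^{1/p}$ together with the linearity of $h_{\mathcal{C}}$ on each cone to push the infimum onto the finite set $E$, with the analogous subadditivity argument for $T$. Your explicit remarks on the nonnegativity of $\langle x,v_{i}\rangle+l_{s}(v_{i})$ on $\Delta$ and on the positivity of $S^{(p)}(L,v_{i})$ are welcome details that the paper leaves implicit.
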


\begin{proof}
From Definition \ref{definition_delta_p} and Remark \ref{rm3.1}, we have:

\[\delta^{(p)}_{G}(L) = \inf_{v\in \mathcal{V}\backslash \{0\}}\frac{A_{X}(v)}{S^{(p)}(L,v)^{1/p}}\]

The polyhedral cone $\mathcal{V}$ is carved into finitely many smaller polyhedral cones $\mathcal{C}_{Y}(X)\cap \mathcal{V}$. Let's say $\mathcal{C}_{Y}(X)\cap \mathcal{V}$ is generated by $\{v_{1},\cdots,v_{n}\}$. Then every element $v$ in the cone can be written as $v = a_{1}v_{i_1} + \cdots a_{k}v_{i_k}$, where every $a_{i}>0$. Note that $l_{s}$ is linear on each $\mathcal{C}_{Y}(X)$. Then we see

\begin{align*}
    S^{(p)}(L,v)^{1/p} &= \left(\frac{\int_{\Delta}P(x)(\langle x,v\rangle+l_{s}(v))^{p}dx}{\int_{\Delta}P(x)dx}\right)^{1/p},\\
    &= \left(\frac{\int_{\Delta}P(x)(\sum_{j=1}^{k} a_{j}(\langle x,v_{i_{j}}\rangle +l_{s}(v_{i_{j}})))^{p}dx}{\int_{\Delta}P(x)dx}\right)^{1/p},\\
    &\leq  \sum_{j=1}^{k} a_{j} \left(\frac{\int_{\Delta}P(x)(\langle x,v_{i_{j}}\rangle +l_{s}(v_{i_{j}}))^{p}dx}{\int_{\Delta}P(x)dx}\right)^{1/p},\\
    & =  \sum_{j=1}^{k} a_{j} S^{(p)}(L,v_{i_{j}})^{1/p}.
\end{align*}
Thus we see that

\[\frac{A_{X}(v)}{S^{(p)}(L,v)^{1/p}} \geq \min_{v_{i_{j}}}\frac{h_{\mathcal{C}}(v_{i_{j}})}{S^{(p)}(L,v_{i_{j}})^{1/p}},\]
since $h_{\mathrm{C}}$ is linear on the $\mathcal{C}_{Y}\cap \mathcal{V}$. Similarly:

\[\frac{A_{X}(v)}{T(L,v)} \geq \min_{v_{i_{j}}}\frac{h_{\mathrm{C}}(v_{i_{j}})}{T(L,v_{i_{j}})}.\]
\end{proof}
Especially, when $p$ is 1, we get

\begin{cor}
With the same setup as above, we have
\[\delta_{G}(X,L) = \min_{v\in E} \frac{h_{\mathcal{C}}(v)}{\langle \mathrm{bar}_{\mathrm{DH}}(\Delta),v\rangle + l_{s}(v)}.\]  
\end{cor}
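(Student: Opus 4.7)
The plan is to obtain this corollary as the direct specialization of Theorem \ref{thm 3.1} at the exponent $p=1$, using the already-established simplification of $S(L,v)$ into a barycenter pairing.

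First I would recall that by definition $S^{(1)}(L,v) = S(L,v)$ and consequently $\delta_G^{(1)}(X,L) = \delta_G(X,L)$, so it suffices to evaluate the formula in Theorem \ref{thm 3.1} at $p=1$. Plugging in gives
\[
\delta_G(X,L) \;=\; \min_{v\in E}\, \frac{h_{\mathcal{C}}(v)}{\dfrac{\int_{\Delta}P(x)(\langle x,v\rangle + l_s(v))\,dx}{\int_{\Delta}P(x)\,dx}}.
\]
The only remaining step is to recognize that the denominator is exactly the expression computed in the last lemma before Theorem \ref{thm 3.1}: since $l_s(v)$ is a constant with respect to the variable $x$, linearity of the integral immediately splits the numerator as
\[
\int_{\Delta} P(x)\langle x,v\rangle\,dx \;+\; l_s(v)\int_{\Delta}P(x)\,dx,
\]
so dividing by $\int_{\Delta}P(x)\,dx$ yields $\langle \mathrm{bar}_{\mathrm{DH}}(\Delta),v\rangle + l_s(v)$ by the very definition of $\mathrm{bar}_{\mathrm{DH}}(\Delta)$.

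Substituting this back gives exactly the claimed formula. There is no real obstacle here; the corollary is a bookkeeping consequence of the two preceding results, and one only needs to be careful that the set $E$ (the primitive generators of extremal rays of the cones $\mathcal{C}_Y(X)\cap \mathcal{V}$) is the same as the one appearing in Theorem \ref{thm 3.1}, which is the case by construction. If anything, the only point worth mentioning explicitly in a written proof is the convention that $S^{(1)}=S$ and thus $\delta_G^{(1)}=\delta_G$, to justify dropping the superscript.
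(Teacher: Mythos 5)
Your proposal is correct and matches the paper's argument: the corollary is stated there as the immediate specialization of Theorem \ref{thm 3.1} at $p=1$, with the denominator identified as $\langle \mathrm{bar}_{\mathrm{DH}}(\Delta),v\rangle + l_{s}(v)$ via the preceding lemma on $S(L,v)$. Nothing is missing.
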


\section{Examples}

Several results in the literature can be recovered as consequences of what we proved previously.

\begin{exm}
For the Fano case and $L = -K_{X}$, as we mentioned above, we can take $s = u$. Then $l_{s}$ coincides with $h_{\mathcal{C}}$ and $\chi = 2\rho_{P}$. As for $\mathrm{bar}_{{DH}}(\Delta)$, we have

\begin{align*}
\mathrm{bar}_{{DH}}(\Delta) &= \frac{\int_{\Delta}\prod_{\alpha\in R_{X}^{+}}\frac{( x+\chi,\alpha )}{( \rho, \alpha )}xdx}{\int_{\Delta}\prod_{\alpha\in R_{X}^{+}}\frac{( x+\chi,\alpha )}{( \rho, \alpha )}dx},\\
& = \frac{\int_{\Delta+\chi}\prod_{\alpha\in R_{X}^{+}}\frac{( x ,\alpha )}{( \rho, \alpha )}(x-\chi)dx}{\int_{\Delta+\chi}\prod_{\alpha\in R_{X}^{+}}\frac{( x,\alpha )}{( \rho, \alpha )}dx},\\
&=  \frac{\int_{\Delta^{+}}\prod_{\alpha\in R_{X}^{+}}\frac{( x ,\alpha )}{( \rho, \alpha )}(x-2\rho_{P})dx}{\int_{\Delta^{+}}\prod_{\alpha\in R_{X}^{+}}\frac{( x,\alpha )}{( \rho, \alpha )}dx},\\
&= \mathrm{bar}_{\mathrm{DH}}(\Delta^{+}) - 2\rho_{P}.
\end{align*}

Thus in the Fano case, we recover \cite[Propostion1.4]{golota2020delta}:

\[\delta_{G}(X,-K_{X}) = \min_{v\in E}\frac{h_{\mathcal{C}}(v)}{h_{\mathcal{C}}(v) + \langle \mathrm{bar}_{\mathrm{DH}}(\Delta^{+})-2\rho_{P},v\rangle}.\]
\end{exm}

\begin{remark}
In \cite[Proposition 5.4]{golota2020delta}, there is a constant $V$ in front of the term $\langle \mathrm{bar}_{\mathrm{DH}}(\Delta^{+})-2\rho_{P},v\rangle$. From our considerations, this constant is actually 1. 
\end{remark}

\begin{exm}
In \cite{delcroix2015log}, Delcroix studies group compactifications and compute $\alpha-$invariant with group actions for ample line bundles. His formula for the anticanonical line bundle of the unique wonderful compactification of a semisimple adjoint group $\hat{G}$ (which is Fano) is beautifully simple and illustrative

\[\alpha_{\hat{G}\times \hat{G}}(X,-K_{X}) = \max\{c|2\rho \in (1-c)\Delta^{+}\}.\]

We reinterpret the formula within our framework. Let $\hat{G}$ be a semisimple adjoint group. We assume the Borel subgroup $\hat{B}$ and the maximal torus $\hat{T}_{max}\subset \hat{B}$ of $\hat{G}$ are fixed. Thus we have also fixed positive roots, simple roots and the positive Weyl chamber $C^{+}$ of $\hat{G}$. The variety $\hat{G}$ is considered as a $G = \hat{G}\times\hat{G}$-variety by the action $(g,h)\cdot x = gxh^{-1}$. The corresponding $H$ is the diagonal of $\hat{G}\times \hat{G}$, so $\hat{G}\times\hat{G}/H\cong \hat{G}$ by identifying $\overline{(g,h)}$ and $gh^{-1}$. The corresponding Borel subgroup of $G$ is $\hat{B}\times \hat{B}^{-}$. The corresponding maximal torus of $G$ is $\hat{T}_{max}\times \hat{T}_{max}$. The spherical lattice of $G/H$ can be identified with the character group of $\hat{B}$, denoted as $\mathcal{X}(\hat{B})$, by sending a character of $\hat{B}\times \hat{B}^{-}$ to the character of $\hat{B}$. With this identification, several objects, including the  algebraic moment polytope of $-K_{X}$, can be identified with objects defined with $\hat{B}$. Note that $\mathcal{X}(\hat{B}) = \mathcal{X}(\hat{T}_{\mathrm{max}})$, whereas the latter is isomorphic to the root lattice since $\hat{G}$ is a semisimple adjoint group. 

Let $\{\alpha_{i}|1\leq i\leq r\}$ be the set of simple roots, where $r$ is the dimension of the torus $\hat{T}_{max}$. Let $Q$ be the polytope constructed as the convex hull of the images of $2\rho + \sum_{i=1}^{r}\alpha_{i}$ under the action of the Weyl group. Then we have $\Delta^{+} = Q\cap C^{+}$. We refer to \cite[Proposition 6.1.11]{brion2007frobenius}.

Let's look at $\Delta^{+}$ in this case more carefully. To each positive root $\beta$, there is a corresponding reflection $s_{\beta}$. This $s_{\beta}$ moves any $x\in C^{+} $ in the $-\beta$ direction. Since simple roots generate positive roots in a nonnegative way, we see that the extremal rays coming from $2\rho + \sum_{i=1}^{r}\alpha_{i}$ are given by $-\alpha_{i}$. Thus the codimensional 1 faces containing $2\rho + \sum_{i=1}^{r}\alpha_{i}$  are generated by $\{\alpha_{i_{1}},\cdots, \alpha_{{i_{r-1}}}\}$. In other words, each of the face is perpendicular to some $v_{i}$, where $\{v_{i}|1\leq i\leq r\}$ is the dual basis of $\alpha_{i}$ in $N\otimes \mathbb{R}$. Because of all the considerations,
\begin{align*}
\Delta^{+} &= \{x\in C^{+}| \langle 2\rho + \sum_{i=1}^{r}\alpha_{i}, v_{i}\rangle \geq \langle x,v_{i}\rangle \text{ for all }v_{i} \},\\
&= \{x\in C^{+}| \langle 2\rho, v_{i}\rangle + 1 \geq \langle x,v_{i}\rangle \text{ for all }v_{i} \}.
\end{align*}
From this we get:

\begin{align*}
\alpha_{G}(X,-K_{X}) &= \max\{c|2\rho \in (1-c)\Delta^{+}\},\\
&= 1-\min\{\lambda|\lambda(\langle 2\rho,v_{i}\rangle + 1)\geq \langle 2\rho,v_{i}\rangle \text{ for all }v_{i}\},\\
&= 1-\max_{i}\left\{\frac{\langle 2\rho,v_{i}\rangle}{\langle 2\rho,v_{i}\rangle + 1}\right\},\\
&= \min_{i}\left\{\frac{1}{1 + \langle 2\rho,v_{i}\rangle}\right\}.
\end{align*}

Let's link this to our formula in Theorem \ref{thm 3.1}. The wonderful compactification $X$ enjoys several good properties. First, it is symmetric. This implies that the valuation cone $\mathcal{V}$ is exactly the cone generated by $\{-v_{i}|1\leq i\leq r\}$ (See \cite[Section 26]{timashev2011homogeneous} or \cite[Section 5.4.2]{delcroix2020k}). Second, it is the canonical embedding of $\hat{G}$ (see [Proposition 3.3.1]\cite{pezzini2010lectures_wonderful}). This implies the set $\{-v_{i}\}$ corresponds exactly to the set of $G-$invariant prime divisors on $X$. What's more, the colored fan of $X$ consists of a single colored cone $(\mathcal{V},\emptyset).$

In this case, the section $u$ mentioned above has weight $2\rho_{P} = 2\rho$. The function $ h_{\mathrm{C}}$ is 1 on each $-v_{i}$ and extends linearly. Now we use Theorem \ref{thm 3.1}:
\begin{align*}
    \alpha_{G}(X,-K_{X}) &= \min_{i}\min_{m\in \Delta}\left\{\frac{1}{\langle m,-v_{i}\rangle + 1}\right\},\\
    & = \min_{i}\min_{m\in \Delta^{+}}\left\{\frac{1}{\langle m-2\rho,-v_{i}\rangle + 1}\right\},\\
    & = \min_{i}\left\{\frac{1}{\langle 2\rho,v_{i}\rangle + 1}\right\},
\end{align*}
since $\langle m, -v_{i}\rangle \leq 0$ for $m\in \Delta^{+}\subset C^{+}$.
\end{exm}

\begin{remark}
Let's discuss $P(x)$ in the computation of $\delta_{G}^{(p)}(-K_{X})$ in the previous example. Let's denote the set of positive roots of $\hat{G}$ with respect to $\hat{B}$ as $\Phi^{+}$. After identifying the spherical lattice of $G/H$ with $\mathcal{X}(\hat{B})$, we should write $P(x) = \prod_{\alpha\in \Phi^{+}}(\frac{\langle x+2\rho,\alpha \rangle}{\langle \rho, \alpha \rangle})^{2}$ instead of $\prod_{\alpha\in \Phi^{+}}\frac{\langle x+2\rho,\alpha \rangle}{\langle \rho, \alpha \rangle}$, where $\rho$ is the half sum of positive roots of $\hat{G}$ with respect to $\hat{B}$. The square here really corresponds to the fact that we need to consider representations of $G$ instead of $\hat{G}$ in the process of the last section.
\end{remark}

Now let's look at a last and very concrete example.

\begin{exm}

    Let $G$ be $\mathrm{PGL}_{2}(\mathbb{C})\times \mathrm{PGL}_{2}(\mathbb{C})$. Let $H$ be the diagonal of $\mathrm{PGL}_{2}(\mathbb{C})\times \mathrm{PGL}_{2}(\mathbb{C})$. Then $G/H\cong \mathrm{PGL}_{2}(\mathbb{C})$ identifying $\overline{(a,b)} $ with $ab^{-1}$. Then the induced group action on $\mathrm{PGL}_{2}(\mathbb{C})$ is $(g,h)\cdot x = gxh^{-1}$ for $g,h,x\in \mathrm{PGL}_{2}(\mathbb{C})$. A usual choice of a Borel subgroup is $B = \{(b_{1},b_{2})\in G|b_{1}\text{ upper-triangular, } b_{2} \text{ lower-triangular}\}$. Let $T_{max} = \{(b_{1},b_{2})\in G|b_{1} \text{ and }b_{2} \text{ are diagonal}\}$. Define:
    \[f\left(\begin{bmatrix} a&b\\c&d \end{bmatrix}\right) = \frac{d^2}{ad-bc}.\]

    It is easy to check that $f$ is a $B$-eigenvector, actually:

    \[\left(\left(\begin{bmatrix} \lambda_{1}&\lambda_{2}\\0&\lambda_{3}\end{bmatrix},\begin{bmatrix} \beta_{1}&0\\\beta_{2}&\beta_{3} \end{bmatrix}\right)\cdot f\right)\left(\begin{bmatrix} a&b\\c&d \end{bmatrix}\right) = \frac{\lambda_{1}}{\lambda_{3}}\frac{\beta_{3}}{\beta_{1}}\frac{d^{2}}{ad-bc}.\]
   
    In this case, the spherical lattice of $G/H$ is isomorphic to $\mathbb{Z}$ and is generated by the element $\omega$ given by $\omega\left(\begin{bmatrix} \lambda_{1}&\lambda_{2}\\0&\lambda_{3}\end{bmatrix},\begin{bmatrix} \beta_{1}&0\\\beta_{2}&\beta_{3} \end{bmatrix}\right) = \frac{\lambda_{1}}{\lambda_{3}}\frac{\beta_{3}}{\beta_{1}}$. We can identify $\omega$ with $\omega_{1}\left(\begin{bmatrix} \lambda_{1}&\lambda_{2}\\0&\lambda_{3}\end{bmatrix}\right) = \frac{\lambda_{1}}{\lambda_{3}}$. The character $\omega_{1}$ generates the character group of the Borel subgroup of the first component of $G$. We identify $\omega$ or $\omega_{1}$ with 1. 

    The wonderful compactification of $\mathrm{PGL}_{2}(\mathbb{C})$ is $X = \mathbb{P}(M_{2\times 2}(\mathbb{C}))$. The only closed $G$-orbit of $X$ is the prime divisor $D = \biggl\{\begin{bmatrix} a&b\\c&d \end{bmatrix}|ad-bc = 0\biggr\}$. The embedding $X$ is simple, meaning that there is only one closed $G-$orbit, and complete. It is colorless simply because the only closed $G$-orbit is of codimension 1. Clearly $v_{D}(f) = -1$. The valuation cone is generated by $\rho(v_{D})$ in a nonnegative way. This cone can be easily identified with the dual of the negative Weyl chamber of the first component of $G$. 

    Now let's look at the section $u$ of the anticanonical bundle of $X$. Clearly $-K_{X}\cong \mathcal{O}(4)$. We claim that the section $u$ is identified with the section $(ad-bc)d^{2}$ of $\mathcal{O}(4)$. We now verify this fact.

    Actually, the section $(ad-bc)d^{2}$, when written locally in the chart $\{d \neq 0\}$ and considered as a section of $-K_{X}$, is $(a-bc)\frac{\partial}{\partial a}\wedge\frac{\partial}{\partial b}\wedge\frac{\partial}{\partial c}$. We have:

    \[\begin{bmatrix} \lambda_{1}&\lambda_{2}\\0&\lambda_{3} \end{bmatrix}\begin{bmatrix} a&b\\c&1 \end{bmatrix} = \begin{bmatrix} \lambda_{1}a+\lambda_{2}c&\lambda_{1}b+\lambda_{2}\\ \lambda_{3}c&\lambda_{3} \end{bmatrix} = \begin{bmatrix} \frac{\lambda_{1}a+\lambda_{2}c}{\lambda_{3}}&\frac{\lambda_{1}b+\lambda_{2}}{\lambda_{3}}\\ c&1 \end{bmatrix}.\]

    Let $a' = \frac{\lambda_{1}a+\lambda_{2}c}{\lambda_{3}},b' = \frac{\lambda_{1}b+\lambda_{2}}{\lambda_{3}},c' = c$. Then $a=\frac{\lambda_{3}a'-\lambda_{2}c'}{\lambda_{1}},b = \frac{\lambda_{3}b'-\lambda_{2}}{\lambda_{1}},c=c'$. Then:

    \[\begin{bmatrix} \lambda_{1}&\lambda_{2}\\0&\lambda_{3} \end{bmatrix}_{*}\left((a-bc)\frac{\partial}{\partial a}\wedge\frac{\partial}{\partial b}\wedge\frac{\partial}{\partial c}\right) = \frac{\lambda_{1}}{\lambda_{3}}\left((a'-b'c')\frac{\partial}{\partial a'}\wedge\frac{\partial}{\partial b'}\wedge\frac{\partial}{\partial c'}\right).\]
    Similar computation holds if we let $\begin{bmatrix} \beta_{1}&0\\\beta_{2}&\beta_{3} \end{bmatrix}$ act on the right side of the section. This shows that $(ad-bc)d^{2}$, when considered as a section of $-K_{X}$, is a $B$-eigenvector with weight $\omega$. So it must be the preferable section $u$.

    It is not hard to see that the moment polytope $\Delta^{+}$ is $[0,2]$. Actually there are only two $B$-stable divisors, namely $D$ and the divisor given by $\{d=0\}$. Then we can compute the polytope associated to the section $u$ and then get the moment polytope. Otherwise we can also conclude that by considering dimensions. We know that $H^{0}(X,\mathcal{O}(4))$ is of dimension 35. On the other hand, $\mathrm{dim}(V_{i}) = (2i+1)^2$ for $i=0,1,2$. At the same time, we see that the polytope $\Delta$ associated to $u$ is $[-1,1]$.
    
    Now using the formula from Theorem \ref{thm 3.1}:
    \[\alpha_{G}(X,-K_{X}) = \min_{m\in [-1,1]}\left\{\frac{1}{\langle m, -1\rangle + 1} \right\} = \frac{1}{2}.\]
    This was also shown in \cite{delcroix2015log}.
    For the $\delta_{G}^{(p)}$-invariant, We obtain

    \[\delta_{G}^{(p)}(X,-K_{X}) = \frac{1}{\left(\frac{\int_{-1}^{1}\langle x+1,1\rangle^{2}(\langle x,-1\rangle +1)^{p}dx}{\int_{-1}^{1}\langle x+1,1\rangle^{2} dx}\right)^{1/p}} = \frac{1}{2}\left(\frac{(p+1)(p+2)(p+3))}{6}\right)^{1/p}.\] 
\end{exm}

\section{Valuative invariants for g-solitons}\label{section_val_inv_g_soliton}

A $\mathbb{Q}-$Fano variety is a normal projective $\mathbb{Q}$-Gorenstein Fano variety with klt singularities. Assume that $X$ is a $\mathbb{Q}-$Fano variety of dimension $m$. Assume also that there is a torus action on $X$. We denote the algebraic torus as $T$ and let $T^{c}$ be the compact real torus inside $T$. We introduce $\mathrm{g}-$solitons as in \cite{han2023yau}.

Let $h$ be a smooth positive hermitian metric on $-K_{X}$ with positive curvature $\omega$ representing $2\pi c_{1}(X)$. We can define a global measure on $X$ as (see \cite{berman2019kahler})
\[d\mu_{0} = |s|_{h}^{\frac{2}{r}}((\sqrt{-1})^{rm^{2}}s^{*}\wedge \overline{s^{*}})^{\frac{1}{r}},\]
where $r$ is sufficiently divisible so that $-rK_{X}$ is a line bundle, $s$ is a nowhere-zero local section of $-rK_{X}$, $s^{*}$ is the dual of $s$. 

For any Kähler form $\omega_{\varphi} = \omega + \sqrt{-1}\partial\bar{\partial}\varphi$ inside $2\pi c_{1}(X)$, there is an associated moment map:

\[m_{\varphi}:X\rightarrow \mathrm{Lie}(T^{c})\cong M_{T}\otimes \mathbb{R},\]
where $\mathrm{Lie}(T^{c})$ is the lie algebra of $T^{c}$ and $M_{T}$ is the character lattice of $T$.

The image of $m_{\varphi}$ is the symplectic moment polytope $-\Delta_{T}^{+}$ (we will introduce the notation $\Delta_{T}^{+}$ below), which is independent of the choice of $\omega_{\varphi}$. Let $\mathrm{g}$ be a strictly positive smooth function on $-\Delta_{T}^{+}$. The $\mathrm{g}-$soliton equation is the following non-linear PDE in $\varphi$ (we refer to \cite{berman2014complex,han2023yau,li2021notes}):

\[\mathrm{g}\circ m_{\varphi}\frac{\omega_{\varphi}^{n}}{n!} = e^{-\varphi}d\mu_{0}. \]

In \cite{rubinstein2021basis}, the authors introduce $\delta$-invariants for $\mathrm{g}-$solitons. We recall some of their definitions. The setup is as above. From the torus action, we get an induced $T-$linearization of $-K_{X}$, thus we have a decomposition $H^{0}(X,-rK_{X}) = \bigoplus_{\alpha} R_{r,\alpha}$, here $\alpha$ is a character of the torus and $R_{r,\alpha}$ represents the subspace of eigenvalue $\alpha$. Denote $\Delta^{+}_{T,r}$
as the set of $\alpha$ such that $R_{r,\alpha}$ is non-trivial. We denote the character lattice of $T$ as $M_{T}$. Then let $$\Delta^{+}_{T} = \overline{\cup_{r\in \mathbb{N}^{*}}\frac{\Delta_{T,r}}{r}}$$ which is a polytope in $M_{T}\otimes \mathbb{R}$. By \cite[Section 2.2]{brion1987image}, this polytope is the opposite of the symplectic moment polytope introduced above, justifying the notations. The strictly positive smooth function $\mathrm{g}$ on $-\Delta_{T}^{+}$ naturally induces a strictly positive smooth function on $\Delta_{T}^{+}$ which we still denote as $\mathrm{g}.$

Let $d_{r}$ be the dimension of $H^{0}(X,-rK_{X})$ and $d_{r,\alpha}$ be the dimension of $R_{r,\alpha}$. Let $\bar{\mathrm{g}}_{r} = \frac{1}{d_{r}}\sum_{\alpha\in\Delta_{T,r}^{+}}\mathrm{g}(\frac{\alpha}{r})d_{r,\alpha}$. Let $\mathcal{F}$ be a linearly bounded $T$-invariant filtration of $R = \bigoplus_{r\geq 0}H^{0}(X,-rK_{X})$ on $X$, meaning that every $\mathcal{F}^{\lambda}R_{m}$ is $T$-invariant. Let $a_{r,\alpha,j}$ be the j-th jumping number of the filtration on $R_{r,\alpha}$. Then

\begin{align*}
S_{r}^{\mathrm{g}}(\mathcal{F}) &:= \frac{1}{rd_{r}\bar{\mathrm{g}}_{r}} \sum_{\alpha\in \Delta_{T,r}^{+}}\sum_{j\geq 1}\mathrm{g}\left(\frac{\alpha}{r}\right)a_{r,\alpha,j},\\
S^{\mathrm{g}}(\mathcal{F}) &:= \lim_{r\rightarrow \infty} S_{r}^{\mathrm{g}}(\mathcal{F}).
\end{align*}
Now we define $$\delta^{\mathrm{g}}(X,-K_{X}) := \inf_{v\in \mathrm{DivVal}^{T*}_{X}}\frac{A_{X}(v)}{S^{\mathrm{g}}(v)}$$.

At the moment, $X$ is a $T$-variety. Now we assume that it is also a $G$-variety for some connected reductive $G$, and the $G$-action commutes with the $T$-action. In this way, $X$ is a $T\times G$-variety. 

\begin{definition}
    Let $X$ be a $\mathbb{Q}-$Fano variety. Assume that there is a $T\times G$-action on $X$, where $G$ is a connected complex reductive algebraic group, and $T$ is an algebraic torus. Let $\mathrm{g}$ be a strictly positive smooth function on $\Delta^{+}_{T}$. Then we introduce
\[\delta_{G}^{\mathrm{g}}(X,-K_{X}):= \inf_{v\in \mathrm{DivVal}_{X}^{T\times G*}}\frac{A_{X}(v)}{S^{\mathrm{g}}(v)},\]
where $\mathrm{DivVal}_{X}^{T\times G*}$ denotes the set of non-trivial $T\times G$-invariant divisorial valuations over $X$.
\end{definition}

We should point out that, in the Fano case, $\delta$-invariant is closely related to $\beta$-invariant. We recall the definition of the $\beta$-invariant here.

\begin{definition}
    Let $X$ be a $\mathbb{Q}-$Fano variety. Assume that there is a $T$-action on $X$, where $T$ is a algebraic torus. Let $\mathrm{g}$ be a strictly positive smooth function on $\Delta^{+}_{T}$. Then
    \[\beta^{\mathrm{g}}(v) := A_{X}(v) - S^{\mathrm{g}}(v),\]
    where $v$ is a $T$-invariant divisorial valuation.
\end{definition}

When we study $\mathrm{g}-$solitons on a spherical variety, we need to have a natural torus action. We know that the $G$-equivariant automorphism group of a spherical homogeneous space $G/H$ is $N_{G}(H)/H$, where $N_{G}(H)$ is the normalizer of $H$ inside $G$. The group action of $N_{G}(H)/H$ on $G/H$ is $pH\cdot gH = gp^{-1}H$. The group $N_{G}(H)/H$ is diagonalizable (\cite[Theorem 4.3]{brion1997varietes}). We elaborate on this fact a little bit more. For a $B-$eigenfunction $f_{\chi}\in \mathbb{C}(G/H)$ with eigenvalue $\chi$, because the $N_{G}(H)/H$-action commutes with the $G$-action, we see that $\gamma\cdot f_{\chi}$ is another $B-$eigenvector with eigenvalue $\chi$. Since $\mathbb{C}(G/H)$ is multiplicity-free, thus there is a non-zero complex number $\theta_{\chi}(\gamma)$ such that $\gamma\cdot f_{\chi} = \theta_{\chi}(\gamma)f_{\chi}$. In this way, there is a group homomorphism:

\begin{equation}\label{Theta_embedding}
\theta:N_{G}(H)/H\rightarrow \mathrm{Hom}(M,\mathbb{C}^{*}),
\end{equation}
where $M$ is the spherical lattice. It turns out this homomorphism is injective and the image is $\mathrm{Hom}(M/\langle \Sigma \rangle,\mathbb{C}^{*})$. See \cite[Section 4.2]{brion1997varietes} for the explication of $\Sigma$.

The neutral component of $N_{G}(H)/H$ is an algebraic torus, and it is also the neutral component of the group of $G-$equivariant automorphisms of $X$. This follows from the uniqueness result of \cite{losev2009uniqueness} and the Luna-Vust theory \cite{luna1983plongements}. See \cite[Section 3.1.3]{delcroix2020k} for more explications.  The neutral component $(N_{G}(H)/H)^{0}$ is the algebraic torus that we are going to use. Notice that $(N_{G}(H)/H)^{0}$ can be identified with $\mathrm{Hom}(M/\langle \Sigma' \rangle,\mathbb{C}^{*})$, where $M/\langle \Sigma' \rangle$ is $M/\langle \Sigma \rangle$ quotient by its torsion part.

The situation here is simpler than general cases thanks to the following lemma.

\begin{lem}
    Assume $v\in \mathrm{QVal}_{X}^{G}$. Then $v$ is also $N_{G}(H)/H$-invariant.
\end{lem}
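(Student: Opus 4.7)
The plan is to exploit the fact that a $G$-invariant valuation on $G/H$ is completely determined by its values on $B$-eigenvectors of $\mathbb{C}(G/H)^*$, combined with the observation that $N_G(H)/H$ acts on such eigenvectors by scalars.

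First, I would note that for any $\gamma \in N_G(H)/H$, we can define a new valuation $\gamma^{*}v$ by $(\gamma^{*}v)(f) := v(\gamma \cdot f)$ for $f \in \mathbb{C}(G/H)^{*}$. Since the $N_G(H)/H$-action commutes with the $G$-action on $G/H$, and $v$ is $G$-invariant, the pushforward $\gamma^{*}v$ is again $G$-invariant: for any $g \in G$ we have $(\gamma^{*}v)(g \cdot f) = v(\gamma g \cdot f) = v(g \gamma \cdot f) = v(\gamma \cdot f) = (\gamma^{*}v)(f)$.

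Next, I would evaluate $\gamma^{*}v$ on a $B$-eigenfunction $f_{\chi}$ with eigenvalue $\chi \in M$. By the discussion immediately preceding the lemma, there exists a non-zero constant $\theta_{\chi}(\gamma) \in \mathbb{C}^{*}$ with $\gamma \cdot f_{\chi} = \theta_{\chi}(\gamma) f_{\chi}$. Since valuations vanish on non-zero constants, we get
\begin{equation*}
(\gamma^{*}v)(f_{\chi}) = v(\theta_{\chi}(\gamma) f_{\chi}) = v(f_{\chi}) = \langle v, \chi \rangle.
\end{equation*}
Thus $\gamma^{*}v$ and $v$ induce the same element of $N \otimes \mathbb{Q}$ under the map $\rho$.

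Finally, I would invoke the injectivity of $\rho : \mathrm{QVal}^{G}_{X} \hookrightarrow N \otimes \mathbb{Q}$ established in \cite[Section 4]{brion1997varietes} (recalled earlier in Section \ref{section 4}): a $G$-invariant $\mathbb{Q}$-valued valuation is determined by its values on the $B$-eigenvectors $f_{\chi}$, $\chi \in M$. Consequently $\gamma^{*}v = v$ for every $\gamma \in N_G(H)/H$, which is precisely the statement that $v$ is $N_G(H)/H$-invariant. The only mild subtlety is keeping track of the fact that valuations are only defined on $\mathbb{C}(G/H)^{*}$ and not on the zero element, but since the scalars $\theta_{\chi}(\gamma)$ are non-zero this causes no issue; no harder step is expected.
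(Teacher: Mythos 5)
Your proposal is correct and follows essentially the same route as the paper's proof: use that the two actions commute to see $\gamma$ sends $G$-invariant valuations to $G$-invariant valuations, evaluate on $B$-eigenfunctions where $\gamma$ acts by the nonzero scalar $\theta_{\chi}(\gamma)$, and conclude equality because a $G$-invariant valuation is determined by its values on the $f_{\chi}$, $\chi\in M$ (the injectivity of $\rho$ on $\mathrm{QVal}_{X}^{G}$, which you make explicit and the paper leaves implicit).
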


\begin{proof}
The two groups $G$ and $N_{G}(H)/H$ commute with each other when they act on $G/H$, then they commute when they act on the space of rational functions on $G/H$, then it follows that they commute when they act on the space of valuations on $G/H$. Then we see easily that, if $v$ is $G$-invariant, $\gamma\cdot v$ is also $G$-invariant for $\gamma\in N_{G}(H)/H$. We just need to show that $\gamma\cdot v = v$. 

Note that $\mathrm{QVal}_{X}^{G}$ can be seen as a subset of $N\otimes \mathbb{Q}$  (see also \cite[Section 3.1]{brion1997varietes}). We just need to show that for any $B$-eigenvector $f_{\chi}\in \mathbb{C}(G/H)$ with eigenvalue $\chi$, we have $v(f_{\chi}) = (\gamma\cdot v)(f)$. This follows from $(\gamma\cdot v)(f_{\chi}) = v(\gamma^{-1}\cdot f_{\chi}) = v(\theta_{\chi}(\gamma^{-1})f_{\chi}) = v(f_{\chi})$, where $\theta$ is the injective homomorphism \eqref{Theta_embedding}.
\end{proof}

From now on, we use $T$ to denote $(N_{G}(H)/H)^{0}$. From the above lemma, $\mathrm{DivVal}_{X}^{T\times G}$ is our valuation cone $\mathcal{V}$, and $\mathrm{DivVal}_{X}^{T\times G*} = \mathcal{V}\backslash \{0\}$.

Let $X$ be a $\mathbb{Q}$-Fano spherical variety. As before, we have
\[H^{0}(X,-rK_{X}) \cong \bigoplus_{m\in M\cap r\Delta}V_{2r \rho_{P}+m}.\]

Here again $2\rho_{P}$ is the weight of the preferable $B-$eigenvector $u$, and $\Delta$ is the polytope associated to $u$.

Because $G$ and $T$ commute with each other when they act on $X$, from multiplicity-freeness (\cite[Theorem 2.1.1]{brion1997varietes}) we see that $u$ is also a $T-$eigenvector. Each $V_{2r\rho_{P}+m}$ is generated by the $B-$eigenvector $u^{r}\chi^{m}$, where $\chi^{m}\in \mathbb{C}(X)^{*}$ is the eigenfunction with eigenvalue $m$. Then each element in $V_{2r\rho_{P}+m}$ has the same $T$-eigenvalue as $u^{r}\chi^{m}$ because $u^{r}\chi^{m}$ generates $V_{2r\rho_{P}+m}$ as a $G-$module and $G$ commutes with $T$.

The section $u$ actually has the trivial $T-$eigenvalue. This can be seen by the construction before the Proposition 4.1 in \cite{gagliardi2015gorenstein} where $s$ stands for our eigenvector $u$. It is given at each point as the wedge of some holomorphic vector fields induced by one-parameter subgroups of $G$. Because $T$ and the one-parameter subgroups of $G$ commute, the holomorphic vector fields are all preserved by the action of $T$. Then $u$ is preserved by the action of $T$. Thus $u^{r}\chi^{m}$ has the $T-$eigenvalue $\bar{m}$, where $\bar{m}$ represents the projection of $m$ on $M/\langle \Sigma'\rangle$.

We have two polytopes with respect to the action of $G$, namely the moment polytope $\Delta^{+}$, and the polytope $\Delta$ related to $u$. They are related by $\Delta^{+} = \Delta + 2\rho_{P}$. Because $u$ has trivial $T$-eigenvalue, we just have one polytope with respect to the action of $T$, namely the moment polytope $\Delta_{T}^{+}$. The projection from $M$ to $M/\langle \Sigma'\rangle$ provides us with a projection from $\Delta$ to $\Delta^{+}_{T}$. We assume that we have a strictly positive smooth function $\mathrm{g}$ on $\Delta^{+}_{T}$.

Now for $v\in \mathcal{V}\backslash \{0\}$, we know that $v$ is also $T$-invariant. So we can consider $S_{k}^{\mathrm{g}}(v)$. Using the same argument in Section \ref{section 4}, we see,

\[S_{k}^{\mathrm{g}}(v) = \frac{\sum_{m\in M\cap k\Delta}\mathrm{g}(\frac{\bar{m}}{k})\prod_{\alpha\in R^{+}}\frac{\langle \alpha,2k\rho_{P}+m+\rho \rangle}{\langle \alpha,\rho\rangle}\frac{\langle m,v\rangle + kh_{\mathcal{C}}(v)}{k}}{\sum_{m\in M\cap k\Delta}\mathrm{g}(\frac{\bar{m}}{k})\prod_{\alpha\in R^{+}}\frac{\langle \alpha,2k\rho_{P}+m+\rho \rangle}{\langle \alpha,\rho\rangle}}.\]
Taking limit, we get:
\begin{equation*}
\begin{split}
    S^{\mathrm{g}}(v) &= \frac{\int_{\Delta}\mathrm{g}(\bar{x})P(x)(\langle x,v\rangle+h_{\mathcal{C}}(v))dx}{\int_{\Delta}\mathrm{g}(\bar{x})P(x)dx},\\
    &= h_{\mathcal{C}}(v) + \langle \mathrm{bar}_{\mathrm{DH}}^{\mathrm{g}}(\Delta),v\rangle,
\end{split}
\end{equation*}
where
\[\mathrm{bar}_{\mathrm{DH}}^{\mathrm{g}}(\Delta) = \frac{\int_{\Delta}\mathrm{g}(\bar{x})P(x)xdx}{\int_{\Delta}\mathrm{g}(\bar{x})P(x)dx}.\]
Then eventually:

\begin{thm}\label{theorem 6.1}
Let $(X,-K_{X})$ be a $\mathbb{Q}-$Fano spherical $G$-variety which is an equivariant completion of $G/H$. Let $\Delta$ be the polytope associated to the $G-$action. Let $T = (N_{G}(H)/H)^{0}$ and we have the corresponding polytope $\Delta_{T}^{+}$. Assume $\mathrm{g}$ is a strictly positive smooth function on $\Delta_{T}^{+}$. Then
\begin{equation*}
\delta^{\mathrm{g}}_{G}(X,-K_{X}) = \min_{v\in E} \frac{h_{\mathcal{C}}(v)}{h_{\mathcal{C}}(v) + \langle \mathrm{bar}_{\mathrm{DH}}^{\mathrm{g}}(\Delta),v\rangle}.
\end{equation*}    
where the set $E$ is introduced before the Theorem \ref{thm 3.1}.
\end{thm}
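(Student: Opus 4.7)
The plan is to reduce the statement to the previously established formula for $S^{\mathrm{g}}(v)$ and then mimic the piecewise-linear reduction argument used in the proof of Theorem \ref{thm 3.1}. I unfold the definition: by construction,
\[\delta^{\mathrm{g}}_{G}(X,-K_{X}) \;=\; \inf_{v\in\mathrm{DivVal}_{X}^{T\times G*}}\frac{A_{X}(v)}{S^{\mathrm{g}}(v)}.\]
Three previously assembled ingredients let me rewrite the right-hand side in very concrete terms. First, by the lemma stating that every $G$-invariant valuation on $G/H$ is automatically $(N_{G}(H)/H)$-invariant (hence $T$-invariant), together with Remark \ref{rm3.1}, the set $\mathrm{DivVal}_{X}^{T\times G*}$ coincides (up to positive scaling, which leaves the ratio unchanged) with $\mathcal{V}\setminus\{0\}$. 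Second, on $\mathcal{V}$ the log discrepancy $A_{X}$ agrees with the piecewise-linear function $h_{\mathcal{C}}$, as recalled just before Theorem \ref{thm 3.1}. Third, the computation immediately preceding the statement gives $S^{\mathrm{g}}(v) = h_{\mathcal{C}}(v) + \langle\mathrm{bar}_{\mathrm{DH}}^{\mathrm{g}}(\Delta),v\rangle$ for $v\in\mathcal{V}$. Combining these,
\[\delta^{\mathrm{g}}_{G}(X,-K_{X}) \;=\; \inf_{v\in\mathcal{V}\setminus\{0\}}\frac{h_{\mathcal{C}}(v)}{h_{\mathcal{C}}(v)+\langle\mathrm{bar}_{\mathrm{DH}}^{\mathrm{g}}(\Delta),v\rangle}.\]

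Next I reduce the infimum over $\mathcal{V}\setminus\{0\}$ to the finite set $E$. The cone $\mathcal{V}$ is partitioned into finitely many polyhedral subcones $\mathcal{C}_{Y}(X)\cap\mathcal{V}$, and on each such subcone both $h_{\mathcal{C}}$ and $v\mapsto\langle\mathrm{bar}_{\mathrm{DH}}^{\mathrm{g}}(\Delta),v\rangle$ are linear, so numerator and denominator of the ratio above are linear functions of $v$. If $v_{i_{1}},\dots,v_{i_{k}}$ are among the primitive generators of $\mathcal{C}_{Y}(X)\cap\mathcal{V}$ and $v=\sum_{j=1}^{k}a_{j}v_{i_{j}}$ with $a_{j}>0$, linearity yields
\[\frac{h_{\mathcal{C}}(v)}{S^{\mathrm{g}}(v)} \;=\; \frac{\sum_{j}a_{j}\,h_{\mathcal{C}}(v_{i_{j}})}{\sum_{j}a_{j}\,S^{\mathrm{g}}(v_{i_{j}})} \;\geq\; \min_{j}\frac{h_{\mathcal{C}}(v_{i_{j}})}{S^{\mathrm{g}}(v_{i_{j}})},\]
by the elementary fact that a ratio of positive convex combinations with common weights is bounded below by the smallest per-term ratio. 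Taking the minimum over all cones $\mathcal{C}_{Y}(X)\cap\mathcal{V}$ and over all their extremal primitive generators gives exactly the asserted formula.

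The one thing needing genuine verification is the positivity of $S^{\mathrm{g}}(v)$ for $v\in\mathcal{V}\setminus\{0\}$, so that the ratio argument is legal and no degenerate cancellation occurs. This follows from $\mathrm{g}>0$ on $\Delta_{T}^{+}$ and the observation that for every $m\in\Delta$ the function $u\,\chi^{m}$ defines a regular section of (a multiple of) $-K_{X}$, hence $h_{\mathcal{C}}(v)+\langle m,v\rangle = v(u\chi^{m})\geq 0$ for $v\in\mathcal{V}$; integrating against $\mathrm{g}(\bar x)P(x)\,dx$ over $\Delta$ shows $S^{\mathrm{g}}(v)\geq 0$, with strict positivity on the full-dimensional polytope $\Delta$ because $v\neq 0$ forces the linear form $x\mapsto h_{\mathcal{C}}(v)+\langle x,v\rangle$ to be strictly positive on an open subset of $\Delta$. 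I anticipate no other serious obstacle: essentially all the non-trivial analytic content (the limit giving $S^{\mathrm{g}}$, the identification $A_{X}=h_{\mathcal{C}}$ on $\mathcal{V}$, and the automatic $T$-invariance of $G$-invariant valuations) has been set up in the preceding discussion, so the proof is a direct combinatorial packaging.
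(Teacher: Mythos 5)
Your proposal is correct and follows essentially the same route as the paper: the paper likewise identifies $\mathrm{DivVal}_{X}^{T\times G*}$ with $\mathcal{V}\setminus\{0\}$ via the $T$-invariance lemma, uses $A_{X}=h_{\mathcal{C}}$ on $\mathcal{V}$ together with the just-computed formula $S^{\mathrm{g}}(v)=h_{\mathcal{C}}(v)+\langle \mathrm{bar}_{\mathrm{DH}}^{\mathrm{g}}(\Delta),v\rangle$, and then reduces to the finite set $E$ exactly as in the proof of Theorem \ref{thm 3.1}. Your explicit mediant-inequality step on each subcone $\mathcal{C}_{Y}(X)\cap\mathcal{V}$ and the positivity check for $S^{\mathrm{g}}(v)$ are just the details the paper leaves implicit when it passes from the formula for $S^{\mathrm{g}}$ to the theorem.
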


\section{Ding-stability For g-solitons}
In this section we are going to study Ding-stability for $\mathrm{g}-$solitons on a $\mathbb{Q}$-Fano spherical variety.  

First we recall some definitions from \cite{han2023yau, li2021notes}.

\begin{definition}\label{deftf}
Let $(X,-K_{X})$ be a $\mathbb{Q}$-Fano variety. Assume that there is a $T\times G$-action on $X$, where $T$ is a complex algebraic torus and $G$ is a connected reductive algebraic group. A $T\times G$-equivariant test configuration $(\mathcal{X},\mathcal{L})$ of $(X,-K_{X})$ is the following data:
\begin{itemize}
\item $\mathcal{X}$ is a normal variety with a $\mathbb{C}^{*}\times T\times G-$action on it. We have a $\mathbb{C}^{*}$-equivariant flat proper morphism $\pi:\mathcal{X}\rightarrow \mathbb{C}$. The $T\times G-$action only acts on the fibers of $\pi$.
\item $\mathcal{L}$ is $\pi-$ample $\mathbb{Q}-$line bundle on $\mathcal{X}$. There is a $\mathbb{C}^{*}\times T\times G-$action on $\mathcal{L}$ which is a lift of the $\mathbb{C}^{*}\times T\times G$-action on $\mathcal{X}$.
\item Over $\mathbb{C}^{*}$, we have a $\mathbb{C^{*}}\times T\times G$-equivariant isomorphism $(\mathcal{X},\mathcal{L})\times_{\mathbb{C}^{*}}\mathbb{C}\cong \mathbb{C}^{*}\times (X,-K_{X})$, where on the right side $\mathbb{C}^{*}$ only acts on $\mathbb{C}^{*}$ and $T\times G$ only acts on $(X,-K_{X})$.
\end{itemize}

We can twist a $T\times G-$equivariant test configuration as defined by Hisamoto \cite[Section 2.1]{hisamoto2016uniform}:
\begin{itemize}
\item Let $\mathbb{T}$ be the connected part of the center of $G$. Let $N_{\mathbb{T}}$ be the group of one-parameter subgroups of $\mathbb{T}$. For any $\xi\in N_{\mathbb{T}}$, we can define the twist $(\mathcal{X}_{\xi},\mathcal{L}_{\xi})$ of a $T\times G$-equivariant test configuration $(\mathcal{X},\mathcal{L})$ by changing the original $\mathbb{C}^{*}$-action on $(\mathcal{X},\mathcal{L})$ to a new one which is the composition of the original one with the $\mathbb{C}^{*}-$action on $(\mathcal{X},\mathcal{L})$ induced by $\xi$. The pair $(\mathcal{X}_{\xi},\mathcal{L}_{\xi})$ is a new $T\times G$-equivariant test configuration. 
\item For $\xi\in N_{\mathbb{T}}\otimes\mathbb{Q}$, assume that $k\xi \in N_{\mathbb{T}}$. We first use the base change $z\mapsto z^{k}$ to define a $T\times G-$equivariant test configuration $(\mathcal{X}_{k},\mathcal{L}_{k})$. There is an induced $\mathbb{C}^{*}-$action on $(\mathcal{X}_{k},\mathcal{L}_{k})$. Notice that for $\lambda\in \mathbb{C}^{*}$, the $\lambda$-action on $(\mathcal{X}_{k},\mathcal{L}_{k})$ actually commutes with $\lambda^{k}-$action on $(\mathcal{X},\mathcal{L})$ through the pullback morphism between them. Now the one-parameter subgroup corresponding to $k\xi$ gives a $\mathbb{C}^{*}-$action on $(\mathcal{X}_{k},\mathcal{L}_{k})$ through the $\mathbb{T}-$action on it. Then we twist the $\mathbb{C}^{*}-$action on $(\mathcal{X}_{k},\mathcal{L}_{k})$ with this action. The test configuration after twist is thought of as the $k$ times of $(\mathcal{X}_{\xi},\mathcal{L}_{\xi})$.
\end{itemize}

Taking $G$ to be trivial, we get the definition of a $T-$equivariant test configuration. Taking $T$ and $G$ to be trivial, we get the definition of a test configuration. Some important classes of test configurations are:
\begin{itemize}
\item A test configuration is a product if there is a $\mathbb{C}^{*}-$action on $(X,-K_{X})$ and that $(\mathcal{X},\mathcal{L})\cong \mathbb{C}\times (X,-K_{X})$ in a $\mathbb{C}^{*}-$ equivariant way, where the action on the right hand side is given by the multiplication of $\mathbb{C}^{*}$ on $\mathbb{C}$ and the $\mathbb{C}^{*}-$action on $(X,-K_{X})$.
\item A test configuration $(\mathcal{X},\mathcal{L})$ is special if $\mathcal{L}\cong -K_{\mathcal{X}/\mathbb{C}}$ and $\mathcal{X}_{0}$ is $\mathbb{Q}-$Fano.
\end{itemize}
\end{definition}

When we have a test configuration $(\mathcal{X},\mathcal{L})$, we have a corresponding filtration $\mathcal{F}(\mathcal{X},\mathcal{L})$ of $R=\bigoplus_{r\in \mathbb{N}}H^{0}(X,-rK_{X})$. This is for example described in \cite[Section 2.5]{BHJuniform2017}.

For a $\mathbb{Q}-$Fano variety with a $T-$action, we assume as before that there is a strictly positive smooth function $\mathrm{g}$ on $\Delta_{T}^{+}$.
\begin{definition}
Given a $T-$equivariant test configuration $(\mathcal{X},\mathcal{L})$, we have the following non-Archimedean functionals (See \cite[Section 1.5 and Appendix C]{han2023yau}):, \begin{itemize}
\item $\textbf{L}^{\mathrm{NA}}(\mathcal{X},\mathcal{L}) = \mathrm{lct}(\mathcal{X},-(K_{\mathcal{X}}+\mathcal{L});\mathcal{X}_{0}) - 1$,
\item $\textbf{E}_{\mathrm{g}}^{\mathrm{NA}}(\mathcal{X},\mathcal{L}) = S^{\mathrm{g}}(\mathcal{F}(\mathcal{X},\mathcal{L}))$,
\item $\textbf{D}_{\mathrm{g}}^{\mathrm{NA}} = \textbf{L}^{NA} - \textbf{E}_{\mathrm{g}}^{\mathrm{NA}}$,
\item $\boldsymbol{\Lambda}_{\mathrm{g}}^{\mathrm{NA}}(\mathcal{X},\mathcal{L}) = \boldsymbol{\Lambda}^{\mathrm{NA}}(\mathcal{X},\mathcal{L}) = T(\mathcal{F}(\mathcal{X},\mathcal{L}))$,
\item $\textbf{J}_{\mathrm{g}}^{\mathrm{NA}} = \boldsymbol{\Lambda}_{\mathrm{g}}^{\mathrm{NA}}- \textbf{E}_{\mathrm{g}}^{\mathrm{NA}}$,
\item $\textbf{J}_{\mathrm{g,\mathbb{T}}}^{\mathrm{NA}}(\mathcal{X},\mathcal{L}) = \inf_{\xi\in N_{\mathbb{T}}\otimes \mathbb{Q}}\textbf{J}_{\mathrm{g}}^{\mathrm{NA}}(\mathcal{X}_{\xi},\mathcal{L}_{\xi})$.
\end{itemize}
The functional $\textbf{E}_{\mathrm{g}}^{\mathrm{NA}}(\mathcal{X},\mathcal{L})$ is called the $\mathrm{g}-$weighted non-Archimedean Monge-Ampère energy functional. The functional $\textbf{D}_{\mathrm{g}}^{\mathrm{NA}}$ is called the $\mathrm{g}-$weighted Ding functional.
\end{definition}

When $\mathrm{g}=1$, we omit $\mathrm{g}$ so it is compatible with notations in literature. 

\begin{remark}
What is going to be important to us is that if $(\mathcal{X},\mathcal{-K_{\mathcal{X}}})$ is a special $T-$equivariant test configuration, we have $\textbf{D}_{\mathrm{g}}^{\mathrm{NA}}(\mathcal{X},\mathcal{-K_{\mathcal{X}}}) = \beta^{\mathrm{g}}(v) = A_{X}(v) - S^{\mathrm{g}}(v)$, where $v$ is the restriction of $v_{\mathcal{X}_{0}}$ to $\mathbb{C}(X)$. This is for example shown in the proof of \cite[Theorem 1.16]{li2021notes}.   
\end{remark}

Now we can proceed to definitions of stability:
\begin{definition}
Let $(X,-K_{X})$ be a $\mathbb{Q}$-Fano variety. Assume that there is a $T\times G$-action on $X$, where $T$ is a complex algebraic torus and $G$ is a connected reductive algebraic group. We assume that there is a strictly positive smooth function $\mathrm{g}$ on $\Delta_{T}^{+}$.
\begin{itemize}
\item We say that $X$ is $G-$equivariantly $\mathrm{g}-$weighted Ding-semistable if $\textbf{D}_{\mathrm{g}}^{\mathrm{NA}}(\mathcal{X},\mathcal{L})\geq 0$ for every $T\times G-$equivariant test configuration.
\item We say that $X$ is $G-$equivariantly $\mathrm{g}-$weighted Ding-polystable if it is $G-$equivariantly $\mathrm{g}-$weighted Ding-semistable, and we have equality if and only if $(\mathcal{X},\mathcal{L})$ is a product test configuration.
\item We say that $X$ is $G-$uniformly $\mathrm{g}-$weighted Ding-stable if there exists a constant $\gamma>0$ such that $\textbf{D}_{\mathrm{g}}^{\mathrm{NA}}(\mathrm{X},\mathrm{L})\geq \gamma \cdot \textbf{J}_{\mathrm{g,\mathbb{T}}}^{\mathrm{NA}}(\mathcal{X},\mathcal{L})$ for every $T\times G-$equivariant test configuration.
\end{itemize}
\end{definition}

\begin{remark}
Note that $\textbf{J}_{\mathrm{g}}^{\mathrm{NA}}$ and $\textbf{J}^{\mathrm{NA}}$ are bounded by each other. See for example \cite[Lemma 1.4]{li2021notes}. So we can also define $G-$equivariantly $\mathrm{g}-$weighted uniformly Ding-stable with $\textbf{J}^{\mathrm{NA}}$ and $G-$uniformly $\mathrm{g}-$weighted Ding-stable with $\textbf{J}_{\mathbb{T}}^{\mathrm{NA}}$.
\end{remark}

Although not written down explicitly, the following proposition is actually in \cite{li2021notes,han2023yau}.

\begin{prop}
    Let $(X,-K_{X})$ be a $\mathbb{Q}$-Fano variety. Assume that there is a $T\times G$-action on $X$, where $T$ is a complex algebraic torus and $G$ is a connected reductive algebraic group. We assume that there is a strictly positive smooth function $\mathrm{g}$ on $\Delta_{T}^{+}$. Then $X$ is $G-$equivariantly $\mathrm{g}-$weighted Ding-semistable if and only if $\delta_{G}^{\mathrm{g}}(X,-K_{X})\geq 1$, $X$ is $G-$equivariantly $\mathrm{g}-$weighted uniformly Ding-stable if and only if $\delta_{G}^{\mathrm{g}}(X,-K_{X})> 1$.
\end{prop}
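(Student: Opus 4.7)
The plan is to reduce $G$-equivariant $\mathrm{g}$-weighted Ding-(semi)stability for test configurations to an inequality on $T \times G$-invariant divisorial valuations, which is precisely what $\delta_{G}^{\mathrm{g}}$ measures. Three ingredients are required, all available in \cite{li2021notes, han2023yau} in the $T$-equivariant weighted setting and extending to $T \times G$-equivariance without change (the two groups commute and $G$ is reductive): (i) an equivariant MMP reduction taking every $T \times G$-equivariant test configuration $(\mathcal{X}, \mathcal{L})$ to a special $T \times G$-equivariant test configuration $(\mathcal{X}^{s}, -K_{\mathcal{X}^{s}})$ with $\textbf{D}_{\mathrm{g}}^{\mathrm{NA}}(\mathcal{X}^{s}, -K_{\mathcal{X}^{s}}) \leq \textbf{D}_{\mathrm{g}}^{\mathrm{NA}}(\mathcal{X}, \mathcal{L})$ and $\textbf{J}_{\mathrm{g}}^{\mathrm{NA}}(\mathcal{X}^{s}, -K_{\mathcal{X}^{s}})$ comparable to $\textbf{J}_{\mathrm{g}}^{\mathrm{NA}}(\mathcal{X}, \mathcal{L})$; (ii) a correspondence between special $T \times G$-equivariant test configurations and non-trivial $T \times G$-invariant divisorial valuations on $X$, obtained by restricting $v_{\mathcal{X}_{0}}$ to $\mathbb{C}(X)$; and (iii) the identity $\textbf{D}_{\mathrm{g}}^{\mathrm{NA}}(\mathcal{X}, -K_{\mathcal{X}}) = \beta^{\mathrm{g}}(v) = A_{X}(v) - S^{\mathrm{g}}(v)$ for such a pair, recalled in the Remark preceding the statement.

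Given these, the semistable equivalence is immediate. If $\delta_{G}^{\mathrm{g}}(X, -K_{X}) \geq 1$, then $A_{X}(v) \geq S^{\mathrm{g}}(v)$ for every non-trivial $T \times G$-invariant divisorial valuation $v$, so by (iii) $\textbf{D}_{\mathrm{g}}^{\mathrm{NA}} \geq 0$ on all special $T \times G$-equivariant test configurations, and by (i) this extends to all $T \times G$-equivariant test configurations. Conversely, by (ii) every non-trivial $T \times G$-invariant divisorial valuation is, up to rescaling, realized by some special $T \times G$-equivariant test configuration, so Ding-semistability combined with (iii) forces $A_{X}(v) - S^{\mathrm{g}}(v) \geq 0$ for all such $v$, whence $\delta_{G}^{\mathrm{g}}(X, -K_{X}) \geq 1$. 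The uniform case runs in parallel: $\delta_{G}^{\mathrm{g}}(X, -K_{X}) > 1$ is equivalent to the existence of $\eta > 0$ with $A_{X}(v) - S^{\mathrm{g}}(v) \geq \eta\, S^{\mathrm{g}}(v)$ uniformly, which on special test configurations gives $\textbf{D}_{\mathrm{g}}^{\mathrm{NA}} \geq \eta\, \textbf{E}_{\mathrm{g}}^{\mathrm{NA}}$; the standard comparison $\textbf{J}_{\mathrm{g}}^{\mathrm{NA}} \leq C\, \textbf{E}_{\mathrm{g}}^{\mathrm{NA}}$ (see \cite[Lemma 1.4]{li2021notes} and its weighted analogue in \cite{han2023yau}) upgrades this into $\textbf{D}_{\mathrm{g}}^{\mathrm{NA}} \geq \gamma\, \textbf{J}_{\mathrm{g}}^{\mathrm{NA}}$ on special test configurations, and (i) extends it to arbitrary $T \times G$-equivariant test configurations.

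The main obstacle is verifying ingredient (i) in the $T \times G$-equivariant weighted setting. Fortunately the Li--Xu reduction and its $\mathrm{g}$-weighted extension in \cite{li2021notes, han2023yau} proceed through MMP steps performed with respect to a $T$-stable boundary; since the reductive $G$-action commutes with both $T$ and the degeneration $\mathbb{C}^{*}$, it descends through every step of the procedure, producing a $T \times G$-equivariant special degeneration, and the weighted non-Archimedean functionals, being determined by the graded filtration of $R = \bigoplus_{k} H^{0}(X, -kK_{X})$ together with its $T$-weight decomposition, are unaffected by the extra $G$-equivariance. Once (i)--(iii) are in hand, the proposition reduces to unwinding the definition of $\delta_{G}^{\mathrm{g}}$.
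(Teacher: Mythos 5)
Your forward direction ($\delta_{G}^{\mathrm{g}}\geq 1$, resp.\ $>1$, implies the corresponding stability) is essentially fine and is the part the paper disposes of by citing \cite[Theorem C.7]{li2021notes} (the paper thereby avoids your ingredient (i): Theorem C.7 estimates $\textbf{D}_{\mathrm{g}}^{\mathrm{NA}}$ of an arbitrary test configuration directly, rather than running an equivariant weighted MMP reduction; your route exists in \cite{han2023yau}, but asserting it carries over ``without change'' is itself nontrivial work). The genuine gap is in the converse direction. You claim in (ii) that every non-trivial $T\times G$-invariant divisorial valuation is, up to rescaling, realized by a special $T\times G$-equivariant test configuration. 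At the stated level of generality --- an arbitrary $\mathbb{Q}$-Fano $X$ with $T\times G$-action, not a spherical variety --- this is false: a divisorial valuation $v$ induces a special test configuration only when it is special (dreamy with $\mathbb{Q}$-Fano degeneration), i.e.\ when the associated graded ring of $\mathcal{F}_{v}$ on $R=\bigoplus_{r}H^{0}(X,-rK_{X})$ is finitely generated and its $\mathrm{Proj}$ is klt; group-invariance of $v$ does not guarantee this. (In the spherical setting of the later sections every $v\in\mathcal{V}$ does yield a special test configuration, which is exactly why the paper can argue combinatorially there, but this proposition is quoted and used before any sphericity is assumed.) Since $\delta_{G}^{\mathrm{g}}$ is an infimum over \emph{all} non-trivial $T\times G$-invariant divisorial valuations, your argument only gives $\beta^{\mathrm{g}}(v)\geq 0$ for the special ones and does not yield $\delta_{G}^{\mathrm{g}}\geq 1$. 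The paper covers this direction by quoting \cite[Theorem 7.8]{han2023yau}, whose proof approximates the invariant filtration $\mathcal{F}_{v}$ by finitely generated truncations, producing equivariant (generally non-special) test configurations whose weighted invariants converge to $A_{X}(v)-S^{\mathrm{g}}(v)$, together with the observation that the twist appearing there can be dropped by \cite[4.1]{li2022g}; some such approximation or filtration argument is what your proof is missing.

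A smaller point: in the uniform case the chain $\textbf{D}_{\mathrm{g}}^{\mathrm{NA}}\geq \eta\,\textbf{E}_{\mathrm{g}}^{\mathrm{NA}}$ followed by $\textbf{J}_{\mathrm{g}}^{\mathrm{NA}}\leq C\,\textbf{E}_{\mathrm{g}}^{\mathrm{NA}}$ is not translation-invariant: replacing $\mathcal{L}$ by $\mathcal{L}+c\mathcal{X}_{0}$ shifts $\textbf{E}_{\mathrm{g}}^{\mathrm{NA}}$ but leaves $\textbf{D}_{\mathrm{g}}^{\mathrm{NA}}$ and $\textbf{J}_{\mathrm{g}}^{\mathrm{NA}}$ unchanged, so the comparison only holds after a normalization of the filtration (e.g.\ minimal jumping number zero). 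This is fixable with the standard normalization, but as written the step is not correct; and the converse uniform implication inherits the same gap described above.
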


The 'only if' part of the proof of the above proposition is in \cite[Theorem 7.8]{han2023yau}. Note that in the proof of \cite[Theorem 7.8]{han2023yau}, the twist can be dropped because of the considerations in \cite[4.1]{li2022g}. The 'if' part is a consequence of \cite[Theorem C.7]{li2021notes}.

Because of the above proposition and Theorem \ref{theorem 6.1}, we immediately get the following result.

\begin{cor}
Let $(X,-K_{X})$ be a $\mathbb{Q}$-Fano spherical $G$-variety which is an equivariant completion of $G/H$. Let $\Delta$ be the polytope associated to the $G-$action. Let $T = (N_{G}(H)/H)^{0}$ and we have the corresponding polytope $\Delta_{T}^{+}$. Assume $\mathrm{g}$ is a smooth positive function on $\Delta_{T}^{+}$. Then the following statements are equivalent:
    \begin{enumerate}
        \item The $\mathrm{g}-$weighted barycenter $bar_{DH}^{\mathrm{g}}(\Delta)$ is in the dual cone of $-\mathcal{V}_{\mathbb{R}}$.
        \item The pair $(X,-K_{X})$ is G-equivariantly $\mathrm{g}$-weighted Ding-semistable.
    \end{enumerate}
\end{cor}

Now we study when a $\mathbb{Q}-$Fano spherical $G$-variety admits a $\mathrm{g}-$soliton. This follows Golota's idea in \cite[Proposition 5.10]{golota2020delta}. 

Let $X$ be a $\mathbb{Q}-$Fano spherical $G$-variety which is an equivariant completion of some $G/H$. From the last section, we have the torus $T = (N_{G}(H)/H)^{0}$. We have the corresponding polytope $\Delta_{T}^{+}$, and $\mathrm{g}$ is a strictly positive smooth function on $\Delta_{T}^{+}$.

In \cite{delcroix2020k} and \cite{delcroix2023uniform}, Delcroix classifies $G$-equivariant test configurations of $(X,-K_{X})$. We follow more closely \cite[Section 4]{delcroix2023uniform} and recall some of the results there. First, we should remark that any $G-$equivariant test configuration is $T\times G-$equivariant (\cite[section 3.4.1]{delcroix2020k}). 

Like in the toric case, a $G$-equivariant test configuration $(\mathcal{X},\mathcal{L})$ of the pair $(X,-K_{X})$ corresponds to a positive rational piecewise linear functions on the moment polytope $\Delta$ associated to the preferable $B-$eigenvector as before. After communications with Delcroix, we find that there is a minor typo in \cite[Theorem 4.1]{delcroix2023uniform}. This piecewise linear function is $g/r$ instead of $g$ in Delcroix's terminology. So let us write the function in this way:

\[\frac{g(x)}{r} = \inf_{(v,s)\in \mathcal{A}} \left(\frac{v(x)}{-s} + \frac{n_{v,s}}{-rs}\right).\]
Here $v\in \mathcal{V}$, $s$ is a negative integer and $\mathcal{A}$ is the set of divisorial valuations on $\mathcal{X}$ provided by the irreducible component of the central fiber $\mathcal{X}_{0}$. Note that every element in $\mathcal{A}$ is $G\times \mathbb{C}^{*}-$invariant. Each $(v,s)$ corresponds to an irreducible component of the central fiber $\mathcal{X}_{0}$ where $v$(respectively $s$) represents the the restriction of the valuation on $\mathbb{C}(X)$ (respectively $\mathbb{C}(t)$) through the inclusion $\mathbb{C}(X)\xhookrightarrow{} \mathbb{C}(\mathcal{X})$(respectively $\mathbb{C}(t)\xhookrightarrow{} \mathbb{C}(\mathcal{X})$). The pair $(v,s)$ is a primitive element in $N\times \mathbb{Z}\cap \mathcal{V}\times \mathbb{Q}$, where $\mathcal{V}\times \mathbb{Q}$ is actually the cone of $G\times \mathbb{C}^{*}-$ invariant valuations of $X\times \mathbb{C}^{*}$. We also recall that $N=\mathrm{Hom}_{\mathbb{Z}}(M,\mathbb{Z})$, where $M$ is the spherical lattice of $X$. 

By descriptions above we see that $(\mathcal{X},\mathcal{L})$ is an integral test configuration, meaning that the scheme theoretic central fiber $\mathcal{X}_{0}$ is integral, if and only if there is only one element in $\mathcal{A}$ and $s = -1$. When $(\mathcal{X},\mathcal{L})$ is integral, the central fiber $\mathcal{X}_{0}$ is a $G\times \mathbb{C}^{*}$-stable subvariety of $\mathcal{X}$, then it is also a spherical $G\times \mathbb{C}^{*}-$variety by \cite[Corollary 2.3.1]{brion1997varietes}. Especially, $\mathcal{X}_{0}$ is normal.  By \cite[Lemma 2.2]{berman2016k}, the central fiber $\mathcal{X}_{0}$ is $\mathbb{Q}-$Gorenstein and $-K_{\mathcal{X}_{0}} = \mathcal{L}|_{\mathcal{X}_{0}}$ is ample. Recall that a $\mathbb{Q}-$Gorenstein spherical variety always has klt singularities (see \cite[Proposition 5.6]{pasquier2017survey}), thus $\mathcal{X}_{0}$ is $\mathbb{Q}-$Fano. Since $\mathcal{L}$ and $-K_{\mathcal{X}/\mathbb{C}}$ are the same over $\mathbb{C}\backslash \{0\}$ as $\mathbb{C}^{*}-$linearized $\mathbb{Q}-$line bundles and  the central fiber consists of only one irreducible component,  we must have $\mathcal{L}\cong -K_{\mathcal{X}/\mathbb{C}}+c\mathcal{X}_{0}$. So we have the special test configuration $(\mathcal{X},K_{\mathcal{X}/\mathbb{C}})$ which differs from $(\mathcal{X},\mathcal{L})$ by a translation (see \cite[Section 6.3]{BHJuniform2017} for the definition of translation). 

In conclusion, the set $\{v\in \mathcal{V}\cap N\}$ is in bijection with the set of $G\times T-$equivariant test configurations of the pair $(X,-K_{X}).$

As mentioned before, the non-Archimedean Ding functional related to the $\mathrm{g}-$soliton takes a very simple form for $T$-equivariant special test configurations:

\begin{align*}
   \mathrm{D}_{\mathrm{g}}(\mathcal{X},-K_{\mathcal{X}}) &= \beta^{g}(v),\\
    &=  A_{X}(v) - S^{g}(v),\\ 
    &= h_{\mathcal{C}}(v) - \frac{\int_{\Delta}\mathrm{g}(\bar{x})P(x)(\langle x,v\rangle+h_{\mathcal{C}}(v))dx}{\int_{\Delta}\mathrm{g}(\bar{x})P(x)dx},\\
    &= -\biggl\langle \frac{\int_{\Delta}\mathrm{g}(\bar{x})P(x)xdx}{\int_{\Delta}\mathrm{g}(\bar{x})P(x)dx},v\biggr\rangle ,\\
    &= -\langle bar_{DH}^{\mathrm{g}}(\Delta),v\rangle.
\end{align*}
Note also that, by \cite[Theorem 4.1]{delcroix2023uniform}, the special test configuration is furthermore a product test configuration if and only if $v\in \mathrm{Lin}(\mathcal{V})\cap N$, where $\mathrm{Lin}(\mathcal{V})$ is the maximal $\mathbb{Q}-$linear subspace contained in $\mathcal{V}$. So, we get:

\begin{prop}\label{prop 7.2}
Let $(X,-K_{X})$ be a $\mathbb{Q}-$Fano spherical $G$-variety which is an equivariant completion of $G/H$. Let $\Delta$ be the polytope associated to the $G-$action. Let $T = (N_{G}(H)/H)^{0}$ and we have the corresponding polytope $\Delta_{T}^{+}$. Assume $\mathrm{g}$ is a strictly positive smooth function on $\Delta_{T}^{+}$. Then $(X,-K_{X})$ is G-equivariantly $\mathrm{g}$-weighted Ding-polystable for special test configurations if and only if $bar_{DH}^{\mathrm{g}}(\Delta)$ is in the relative interior of the dual cone of $-\mathcal{V}_{\mathbb{R}}$.
\end{prop}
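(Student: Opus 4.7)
The plan is to combine the explicit description of $G$-equivariant special test configurations recalled just above the statement with the simple closed form of the Ding functional, and then translate the stability condition into a statement of convex geometry about the weighted barycenter and the cone $-\mathcal{V}_{\mathbb{R}}$.

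First, I would record the correspondence from Delcroix's work: up to normalization, every $G$-equivariant special test configuration of $(X,-K_X)$ is determined by a primitive lattice vector $v\in \mathcal{V}\cap N$, and such a test configuration is a product test configuration precisely when $v\in \mathrm{Lin}(\mathcal{V})\cap N$. Combining this with the formula for the Ding functional derived immediately before the statement, namely
\[
\mathbf{D}_{\mathrm{g}}^{\mathrm{NA}}(\mathcal{X},-K_{\mathcal{X}}) \;=\; -\bigl\langle \mathrm{bar}_{\mathrm{DH}}^{\mathrm{g}}(\Delta),v\bigr\rangle,
\]
the statement \emph{$(X,-K_X)$ is $G$-equivariantly $\mathrm{g}$-weighted Ding-polystable for special test configurations} becomes the purely linear-algebraic condition
\[
\bigl\langle \mathrm{bar}_{\mathrm{DH}}^{\mathrm{g}}(\Delta),-v\bigr\rangle \;\geq\; 0 \quad \text{for all } v\in \mathcal{V}\cap N,
\]
with equality exactly when $v\in \mathrm{Lin}(\mathcal{V})\cap N$.

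Next I would pass from the rational condition to a condition on $\mathcal{V}_{\mathbb{R}}$. Since the pairing is linear and invariant under positive rescaling, the inequality for all $v\in \mathcal{V}\cap N$ is equivalent to the same inequality for all $v\in \mathcal{V}$, and by density of $\mathcal{V}$ in $\mathcal{V}_{\mathbb{R}}$ together with continuity of the pairing, also for all $v\in \mathcal{V}_{\mathbb{R}}$. Setting $w=-v$, the nonnegativity amounts to
\[
\mathrm{bar}_{\mathrm{DH}}^{\mathrm{g}}(\Delta)\in (-\mathcal{V}_{\mathbb{R}})^{\vee},
\]
i.e.\ the semistability half of the statement, already obtained as a corollary above.

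Finally, I would handle the equality/strictness condition. By standard convex geometry, a point $y$ of a closed convex cone $C^{\vee}$ lies in the relative interior of $C^{\vee}$ if and only if $\langle y,w\rangle>0$ for every $w\in C$ not contained in the lineality space $\mathrm{Lin}(C)$. Applied to $C=-\mathcal{V}_{\mathbb{R}}$, whose lineality space is $\mathrm{Lin}(\mathcal{V}_{\mathbb{R}})$, this converts the polystability requirement (equality only on product test configurations, i.e.\ only for $v\in \mathrm{Lin}(\mathcal{V})\cap N$) into membership of $\mathrm{bar}_{\mathrm{DH}}^{\mathrm{g}}(\Delta)$ in the relative interior of the dual cone of $-\mathcal{V}_{\mathbb{R}}$. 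The main point where care is needed is the equality clause: one has to check that a rational vector $v\in \mathcal{V}\cap N$ satisfies $\langle \mathrm{bar}_{\mathrm{DH}}^{\mathrm{g}}(\Delta),v\rangle=0$ if and only if $v\in \mathrm{Lin}(\mathcal{V})$, and to argue that this rational statement extends to $\mathcal{V}_{\mathbb{R}}$ so as to match the characterization of the relative interior of the dual cone. Apart from this convex-geometric step, the rest of the proof is a direct substitution of the Ding formula.
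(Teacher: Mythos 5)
Your proposal is correct and follows essentially the same route as the paper, which derives the formula $\mathbf{D}_{\mathrm{g}}^{\mathrm{NA}}(\mathcal{X},-K_{\mathcal{X}})=-\langle \mathrm{bar}_{\mathrm{DH}}^{\mathrm{g}}(\Delta),v\rangle$ for special test configurations, invokes Delcroix's correspondence between special (resp.\ product) test configurations and lattice points $v\in\mathcal{V}\cap N$ (resp.\ $v\in\mathrm{Lin}(\mathcal{V})\cap N$), and leaves the final translation into the relative-interior condition implicit. Your explicit convex-geometric step (passing from lattice points to $\mathcal{V}_{\mathbb{R}}$ and characterizing the relative interior of the dual cone) is exactly the omitted detail, and it goes through because $\mathcal{V}_{\mathbb{R}}$ is a rational polyhedral cone, so strict positivity on the primitive generators of the extremal rays outside $\mathrm{Lin}(\mathcal{V})$ extends to all of $\mathcal{V}_{\mathbb{R}}\setminus\mathrm{Lin}(\mathcal{V}_{\mathbb{R}})$.
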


Now we can get $T\times G$-uniformly $\mathrm{g}-$weighted Ding-stability for special test configurations from G-equivariantly $\mathrm{g}$-weighted Ding-polystable for special test configurations. We use '$T\times G$-uniformly' here since Delcroix twists by  $N_{T}\otimes \mathbb{Q}$ in his paper \cite{delcroix2023uniform}. 

\begin{prop}\label{prop 7.3}
Assume the setup from the last proposition and also that $(X,-K_{X})$ is G-equivariantly $\mathrm{g}$-weighted Ding-polystable for special test configurations, then $(X,-K_{X})$ is $T\times G$-uniformly $\mathrm{g}-$weighted Ding-stable for special test configurations. 

\begin{proof}
The non-Archimedean functionals for spherical test configurations are described in \cite[Section 5]{delcroix2023uniform}. Following the description of $\textbf{J}^{\mathrm{NA}}$ there, we just need to prove that there is a $\epsilon>0$ such that the following inequality holds for any $v\in \mathcal{V}$:
\[-\langle bar_{DH}^{\mathrm{g}}(\Delta),v\rangle \geq \epsilon \inf_{l\in \mathrm{Lin}(\mathcal{V})}\int_{\Delta}(\max_{\Delta}(v+l) - (v+l))P dx.\]

We denote the weighted barycenter $bar_{DH}^{\mathrm{g}}(\Delta)$ by $b$. By assumption, $b$ is in the relative interior of $-\mathcal{V}_{\mathbb{R}}^{\vee}$. 

We pick a complement $\mathcal{W}$ of $\mathrm{Lin}(\mathcal{V})$ in $N\otimes \mathbb{Q}$. Now any $ v\in \mathcal{V}$ is written uniquely as $v = v_{1}+v_{2}$, where $v_{1}\in \mathrm{Lin}(\mathcal{V})$ and $v_{2}\in \mathcal{W}\cap \mathcal{V}$. The set $\mathcal{W}\cap \mathcal{V}$ is a strictly convex polyhedral cone inside $\mathcal{W}$. We know that $-b$ as a function vanishes on $\mathrm{Lin}(\mathcal{V})$ and is strictly positive on $\mathcal{W}\cap \mathcal{V}\backslash \{0\}$.

Now we endow $M\otimes \mathbb{R}$ and $N\otimes \mathbb{R}$ with inner products so they are dual Euclidean spaces. Then:
\begin{align*}
\inf_{l\in \mathrm{Lin}(\mathcal{V})}\int_{\Delta}(\max_{\Delta}(v+l) - (v+l))P dx
&\leq \int_{\Delta}(\max_{\Delta}v_{2}-v_{2})Pdx,\\
&\leq C\max_{x,y\in \Delta}\langle v_{2},x-y \rangle,\\
&\leq C|v_{2}|\mathrm{diam}(\Delta),
\end{align*}
where $\mathrm{diam}(\Delta)$ is the diameter of $\Delta$.

We have $-\langle b,v\rangle  = -\langle b,v_{2} \rangle $. The set $\{v\in \overline{\mathcal{W}\cap \mathcal{V}}\subset \mathcal{W}\otimes \mathbb{R} \text{ such that }|v| = 1\}$ is compact. The function $-b$ is strictly positive on it because $\overline{\mathcal{W}\cap \mathcal{V}}$ is a polyhedral cone generated by some elements in $\mathcal{W}\cap \mathcal{V}$. So $-b$ is bounded from below by a positive constant on $\{v\in \overline{\mathcal{W}\cap \mathcal{V}}\subset \mathcal{W}\otimes \mathbb{R}||v| = 1\}$. Then we have $-\langle b,v_{2}\rangle \geq C'|v_{2}|$, $C'>0$. 
\end{proof}
\end{prop}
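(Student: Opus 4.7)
The plan is to convert the uniform stability statement into a linear inequality on the cone $\mathcal{V}$, then exploit the polyhedral structure together with compactness. Since both the Ding functional and the $\mathbf{J}$-functional for $G$-equivariant special test configurations are determined by the associated valuation $v \in \mathcal{V}$, the task reduces to finding $\epsilon > 0$ such that
\[
-\langle b, v\rangle \ \geq\ \epsilon \cdot \inf_{l \in \mathrm{Lin}(\mathcal{V})} \int_{\Delta} \bigl(\max_{\Delta}(v+l) - (v+l)\bigr) P(x)\,\mathrm{g}(\bar{x})\,dx
\]
for every $v \in \mathcal{V}$, where $b = \mathrm{bar}_{\mathrm{DH}}^{\mathrm{g}}(\Delta)$. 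The left-hand side is $\mathbf{D}_{\mathrm{g}}^{\mathrm{NA}}$ by the explicit formula we derived for special test configurations; the right-hand side is the twisted $\mathbf{J}$-functional in Delcroix's combinatorial form.

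The key structural observation is that the polystability hypothesis places $b$ in the \emph{relative} interior of $-\mathcal{V}_{\mathbb{R}}^{\vee}$. This means $-\langle b, \cdot\rangle$ vanishes identically on $\mathrm{Lin}(\mathcal{V})$ and is strictly positive on $\mathcal{V} \setminus \mathrm{Lin}(\mathcal{V})$. I would fix a linear complement $\mathcal{W}$ of $\mathrm{Lin}(\mathcal{V})$ in $N \otimes \mathbb{Q}$ and split every $v \in \mathcal{V}$ uniquely as $v = v_1 + v_2$ with $v_1 \in \mathrm{Lin}(\mathcal{V})$ and $v_2 \in \mathcal{W} \cap \mathcal{V}$. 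Both sides of the desired inequality are invariant (or can be made so, by the infimum over $l$) under translating $v$ by $\mathrm{Lin}(\mathcal{V})$, so the inequality reduces to a comparison of two positively homogeneous functions of $v_2$ alone.

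For the upper bound on the right-hand side, I would take $l = -v_1$ in the infimum and estimate $\max_{\Delta}(v_2) - v_2(x) \leq \langle v_2, x_{\max} - x \rangle \leq |v_2| \cdot \mathrm{diam}(\Delta)$, yielding a bound of the form $C |v_2|$ after integrating $P \cdot \mathrm{g}$ (which is a bounded positive measure on the compact polytope $\Delta$). For the lower bound on the left-hand side, I would use that $\overline{\mathcal{W} \cap \mathcal{V}}$ is a strictly convex polyhedral cone in $\mathcal{W}$, so its unit sphere $\{v_2 : |v_2| = 1\}$ is compact; the continuous function $v_2 \mapsto -\langle b, v_2 \rangle$ is strictly positive there by the relative-interior assumption, hence bounded below by some $C' > 0$. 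By homogeneity, $-\langle b, v\rangle = -\langle b, v_2\rangle \geq C' |v_2|$, and choosing $\epsilon = C'/C$ closes the argument.

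The main obstacle I anticipate is identifying the correct combinatorial description of the twisted $\mathbf{J}$-functional for $T \times G$-equivariant spherical test configurations and making sure the twist direction (elements of $N_T \otimes \mathbb{Q}$, which under spherical identifications correspond to $\mathrm{Lin}(\mathcal{V})$) is exactly the one that kills the components on which $-b$ vanishes. Once this combinatorial identification is secured via Delcroix's framework, the rest is a straightforward compactness-plus-homogeneity estimate; the $\mathrm{g}$-weight enters only through a strictly positive smooth factor on $\Delta$, which does not affect the qualitative behavior of either side.
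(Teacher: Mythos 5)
Your proposal is correct and follows essentially the same route as the paper: reduce to the linear inequality on $\mathcal{V}$ via Delcroix's combinatorial description of the non-Archimedean functionals, split $v = v_{1}+v_{2}$ along a complement of $\mathrm{Lin}(\mathcal{V})$, bound the $\mathbf{J}$-side above by $C|v_{2}|$ using $l=-v_{1}$ and $\mathrm{diam}(\Delta)$, and bound $-\langle b,v_{2}\rangle$ below by $C'|v_{2}|$ via compactness of the unit sphere of $\overline{\mathcal{W}\cap\mathcal{V}}$ together with the relative-interior hypothesis. The only cosmetic difference is that you keep the weight $\mathrm{g}(\bar{x})$ in the $\mathbf{J}$-integrand, which is immaterial since $\mathrm{g}$ is bounded above and below by positive constants on the compact polytope, while the paper works with the unweighted $\mathbf{J}^{\mathrm{NA}}$ (the two being mutually bounded, as noted in the remark preceding the proposition).
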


Then we get the following theorem:

\begin{thm}\label{thm7.2}
Let $(X,-K_{X})$ be a $\mathbb{Q}$-Fano spherical $G$-variety which is an equivariant completion of $G/H$. Let $\Delta$ be the polytope associated to the $G-$action. Let $T = (N_{G}(H)/H)^{0}$ and we have the corresponding polytope $\Delta_{T}^{+}$. Assume $\mathrm{g}$ is a smooth positive function on $\Delta_{T}^{+}$. Then the following three statements are equivalent:
    \begin{enumerate}
        \item The $\mathrm{g}-$weighted barycenter $bar_{DH}^{\mathrm{g}}(\Delta)$ is in the relative interior of the dual cone of $-\mathcal{V}_{\mathbb{R}}$.
        \item The pair $(X,-K_{X})$ is G-equivariantly $\mathrm{g}$-weighted Ding-polystable for special test configurations.
        \item There exists a $\mathrm{g}-$soliton on $X$.
    \end{enumerate}
\end{thm}

\begin{proof}
The first two are equivalent because of the proposition \ref{prop 7.2}. We can get (3) from (1) by using Proposition \ref{prop 7.3} and \cite[Theorem 1.7]{han2023yau}. If we have (3), we get (2) by using \cite[Theorem 1.17]{li2021notes} and \cite[Theorem 1.21]{li2021notes}.
\end{proof}

Theorem \ref{thm7.2} is a generalization of Theorem A in \cite{delcroix2020k}. 

As mentioned in the introduction, after completing this paper, we were informed that Theorem \ref{thm7.2} had already been established in \cite{li2022weighted}. 

\begin{remark}
There is another way to get (3) from (2) in the above theorem. By Minimal Model Program developed in \cite{li2014special},\cite{fujita2019valuative}, we actually know that G-equivariantly integral $\mathrm{g}$-weighted Ding-polystability is equivalent to G-equivariantly $\mathrm{g}$-weighted Ding-polystability. The latter is equivalent to reduced uniformly $\mathrm{g}-$weighted stability by \cite[Theorem 1.17]{li2021notes} and \cite[Theorem 1.3]{blum2023existence}. Then reduced uniformly $\mathrm{g}-$weighted stability is equivalent to the existence of the $\mathrm{g}-$soliton by \cite[Theorem 1.21]{li2021notes}. 
\end{remark}

\section{Applications to Sasaki-Einstein metrics}

In \cite[proposition 2]{apostolov2021weighted}, the authors describe an interesting relationship between a special type of $\mathrm{g}-$soliton and the existence of Sasaki-Einstein structure.

\begin{prop}\label{prop7.1}
Let $X$ be a smooth Fano variety of complex dimension $m$. Let $T$ be a complex torus acting on $X$ with the canonical moment polytope $\Delta_{T}^{+}$. Let $l$ be a positive affine-linear function on $\Delta_{T}^{+}$ written as $l(x) = \langle \xi, x\rangle + a$. Let $T^{c}$ be the real compact torus inside $T$. A $T^{c}-$ invariant Kähler metric $\omega\in 2\pi c_{1}(X)$ is an $l^{-(m+2)}$-soliton if and only if the lift $\hat{\xi}$ of $\xi$ to $K_{X}$ via $l$ is the Reeb vector field of a Sasaki-Einstein structure defined on the unit circle bundle inside $K_{X}$ with respect to the hermitian metric on $K_{X}$ with curvature $-\omega$. 
\end{prop}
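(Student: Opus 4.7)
The plan is to translate the Sasaki--Einstein condition on the unit circle bundle $S\subset K_X$ into a weighted Monge--Amp\`ere equation on $X$, then recognize it as the $g$-soliton equation with $g=l^{-(m+2)}$.

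First I would set up the transverse geometry. Let $h$ be the hermitian metric on $K_X$ with curvature $-\omega$, let $r=|\cdot|_h$ be the induced radial function on $K_X^\times$, and let $\eta_0=d^c\log r^2$ be the standard contact form on $S$ (whose Reeb is the infinitesimal generator $\xi_0$ of the fiber rotation). The Chern connection of $h$ gives a splitting of $TK_X^\times$ into a vertical direction and a horizontal direction isomorphic to $TX$, and via this splitting $\xi\in \mathrm{Lie}(T^c)$ has a horizontal lift $\tilde\xi$. The lift of $\xi$ \emph{via} $l(x)=\langle\xi,x\rangle+a$ is then $\hat\xi=\tilde\xi+a\xi_0$; I would check that $\eta_0(\hat\xi)$ equals $l$ pulled back from $\Delta_T^+$ via the moment map of $(X,\omega)$, so the positivity $l>0$ is exactly the Reeb condition, and $\hat\xi$ is transverse to the contact distribution.

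Next I would write down the Calabi--Yau condition for the cone metric $\bar g=dr^2+r^2 g_S$ on $K_X^\times$ with Reeb $\hat\xi$. The total space $K_X^\times$ has complex dimension $m+1$ and carries a canonical nowhere-vanishing $(m+1,0)$-form $\Omega$ (the tautological lift of sections of $K_X$). Parametrizing Kähler forms in $2\pi c_1(X)$ by $\omega_\varphi=\omega+i\partial\bar\partial\varphi$ and pushing down the equation $\bar g$ Ricci-flat via the quotient by the Reeb $\mathbb{R}$-action, a direct computation in the vertical/horizontal splitting shows that the radial integration produces a factor $l\circ m_{\omega_\varphi}$ in the volume form, and that the Ricci-flat condition becomes, on $X$,
\begin{equation*}
\frac{\omega_\varphi^m}{m!}\;=\;\frac{e^{-\varphi}}{\bigl(l\circ m_{\omega_\varphi}\bigr)^{m+2}}\,d\mu_0,
\end{equation*}
up to an overall normalization. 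Here the exponent $m+2$ arises as $(m+1)+1$: one factor comes from the transverse volume form being scaled by $l$, and the power $m+1$ comes from the contact Hamiltonian of $\hat\xi$ appearing in the cone volume $r^{2m+1}dr\wedge\eta\wedge(d\eta)^m$ after reducing to the transverse slice. Comparing this with the definition of a $g$-soliton, $g\circ m_\varphi\,\omega_\varphi^m/m! = e^{-\varphi}d\mu_0$, yields $g=l^{-(m+2)}$.

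The main obstacle is Step 2: carefully executing the reduction of the cone Calabi--Yau equation to a PDE on $X$ so that the exponent $-(m+2)$ comes out exactly. This requires keeping track of the radial Jacobian, the normalization of the contact form $\eta_0$ so that $\eta_0(\hat\xi)=l$, and the compatibility between the Chern connection splitting and the Reeb quotient. Once this identification is in place, the equivalence is tautological: each Kähler potential $\varphi$ solving the $l^{-(m+2)}$-soliton equation gives a Ricci-flat cone metric on $K_X^\times$ with Reeb $\hat\xi$, hence a Sasaki--Einstein structure on $S$, and conversely.
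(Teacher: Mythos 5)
First, a point of reference: the paper does not prove this proposition at all --- it is quoted verbatim from \cite[Proposition 2]{apostolov2021weighted} and used as a black box, so there is no in-paper argument to compare yours against. Your sketch follows the strategy that the cited source (and the related literature of Apostolov--Calderbank, Han--Li, Li) actually uses: fix the CR structure on the unit circle bundle $S\subset K_X$ determined by $h$, change the Reeb field inside the torus, and convert the Sasaki--Einstein/Calabi--Yau cone condition into a weighted Monge--Amp\`ere equation on $X$. So the route is the right one; the problem is that the two places where the content lies are either asserted or stated incorrectly.

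Concretely: (1) Your lift is wrong as written. Since $\eta_0$ is the connection $1$-form, it vanishes on Chern-horizontal vectors, so for $\hat\xi=\tilde\xi+a\xi_0$ one gets $\eta_0(\hat\xi)=a$, a constant, not $l\circ m_\omega$; moreover the bare horizontal lift $\tilde\xi$ is in general not a CR/contact vector field, so this $\hat\xi$ does not even define a Sasaki structure. The lift ``via $l$'' is $\hat\xi=\tilde\xi+(\langle m_\omega,\xi\rangle+a)\xi_0$, equivalently the canonical holomorphic lift of $\xi$ to $K_X$ plus $a\xi_0$; getting this momentum correction right is exactly what makes $\eta_0(\hat\xi)=l\circ m_\omega$ and what ties the constant $a$ to the linearization on $K_X$. (2) The heart of the proposition --- that the Ricci-flat cone equation reduces on $X$ precisely to $\mathrm{g}\circ m_\varphi\,\omega_\varphi^m/m!=e^{-\varphi}d\mu_0$ with $\mathrm{g}=l^{-(m+2)}$ --- is only asserted (``a direct computation shows''), with a heuristic $(m+1)+1$ count for the exponent; that computation is the entire proof, and the mechanism you propose for it, ``pushing down via the quotient by the Reeb $\mathbb{R}$-action,'' is not available in the relevant generality: for irrational $\xi$ the Reeb flow is irregular and has no quotient. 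One must instead reduce along the regular $S^1$-fibration $S\to X$ generated by $\xi_0$ (or argue transversally), expressing the $\hat\xi$-Sasaki data in terms of $(\omega,l)$ on $X$ and comparing volume forms, and one must also track the homogeneity normalization of the canonical $(m+1,0)$-form under $\hat\xi$, which your sketch never addresses. A smaller mismatch: the statement concerns a single metric $\omega$ (the hermitian metric on $K_X$ has curvature $-\omega$ for that same $\omega$), so the equivalence should be checked for $\omega$ itself rather than derived as a PDE in a potential $\varphi$ relative to an unrelated reference curvature. As it stands, then, the proposal is a correct roadmap to the argument of \cite{apostolov2021weighted} but not a proof; the clean fix here is simply to cite that result, as the paper does.
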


The authors of \cite{apostolov2021weighted} use this proposition to prove the existence of a Sasaki-Einstein structure on a unit circle bundle associated to $K_{X}$ for a Fano semi-simple principle bundle $X$ with a smooth toric Fano fiber.

We can use their strategy to get a similar result for smooth Fano horospherical manifolds. First we need to introduce some backgrounds here.

Let $G,B$ be as above. Let $U$ be the unipotent radical of $B$. Let $R$ be the set of roots of $(G,T)$ and $R^{+}$ be the set of positive roots associated to $B$. Thus we have the root decomposition:
\[\mathfrak{g} = \mathfrak{t}\oplus\bigoplus_{\alpha\in R}\mathfrak{g}_{\alpha}\]

Let $S$ be the set of simple roots. Let $H$ be a closed subgroup of $G$ that contains the unipotent radical of the opposite Borel subgroup $B^{-}$. Then $H$ is a horospherical subgroup of $G$ and $G/H$ is a horospherical homogeneous space (see \cite[Definition 2.1]{pasquier2008varietes}). The normalizer of $H$, $N_{G}(H)$, is a parabolic subgroup of $G$ containing $B^{-}$ (see \cite[proposition 2.2]{pasquier2008varietes}). Let $L$ be the unique Levi subgroup of $N_{G}(H)$ that contains $T_{\mathrm{max}}.$ Let $Q$ be the opposite parabolic subgroup of $N_{G}(H)$ in $G$ with respect to $L$. The parabolic subgroup $Q$ has the unipotent radical $R_{u}(Q)$. Let's denote $I\subset S$ as the set of $\alpha\in S$ such that $\mathfrak{g}_{\alpha}$ is not in the Lie algebra of $R_{u}(Q)$. If we denote $R_{I}$ as the sub root system of $R$ generated by $I$, then the Lie algebra of the Levi subgroup $L$ is,
\[\mathfrak{l} = \mathfrak{t}\oplus \bigoplus_{\alpha\in R_{I}}\mathfrak{g}_{\alpha}.\]
The morphism $\theta$ introduced in \eqref{Theta_embedding} is an isomorphism (see \cite[Corollary 4.2.1, Corollary 4.3.1]{brion1997varietes}) since $H$ is a horospherical subgroup. 
What makes the combinatorial criterion in Theorem \ref{thm7.2} simpler is the fact that the valuation cone $\mathcal{V}$ is the whole $N\otimes \mathbb{Q}$ (\cite[Corollary 4.2.1, Corollary 4.3.1]{brion1997varietes}), so the dual cone of $-\mathcal{V}_{\mathbb{R}}$ is just $\{0\}$.

Now we let $X$ be a smooth projective Fano completion of $G/H$. In this case, the preferable section $u$ of $-K_{X}$ has weight $2\rho_{Q} = \sum_{\alpha\in R^{+}\backslash R_{I}}\alpha$ (see \cite[Proposition 4.2]{gagliardi2015gorenstein}) and $P(x) = \prod_{\alpha\in R^{+}\backslash R_{I}}\frac{\langle x+2\rho_{Q},\alpha \rangle}{\langle \rho, \alpha \rangle}$ (see \cite[Section 4.2]{brion1989groupe} or \cite[Section 4.3, Section 5.3]{delcroix2020k}).

We shall prove the following theorem:

\begin{thm}\label{sasakihoro}
Let $X$ be a smooth Fano horospherical variety. Then the total space of the canonical bundle with the zero section removed,
\[K_X^\times := K_X \setminus \{0\},\]
admits a Calabi-Yau cone metric, given by a Sasaki-Einstein structure on a unit circle inside $K_{X}$ with respect to some hermitian metric on $K_{X}$.
\end{thm}

\begin{proof}
We follow the argument of \cite{apostolov2021weighted}. Assume that our $X$ has complex dimension $m$ and the complex dimension of $P/H$ is $r$. So $r$ is also the real dimension of $\Delta$. We have:
\begin{align*}
    m = \mathrm{dim}(X) &= \mathrm{dim}(P/H) + \mathrm{dim}(G/P)\\
                    &= r + \# R^{+}\backslash R_{I}.
\end{align*}

Let's consider the set $\Delta^{*} = \{\xi\in N\otimes \mathbb{R}|\langle \xi,x\rangle + 1\geq0 \text{ for } x\in \Delta\}$. This is the so called dual polytope of $\Delta$.  Polytopes like $\Delta^{*}$ play a very important role in classifying Fano horospherical varieties, see \cite{pasquier2008varietes}. 

Because of Theorem \ref{thm7.2} and Proposition \ref{prop7.1}, we just need to find $\xi\in \mathrm{Int}(\Delta^{*})$ such that:
\begin{equation}\label{eq1}
\int_{\Delta}(\langle \xi,x\rangle + 1)^{-m-2}xP(x)dx = 0
\end{equation}

In order to do this, we consider the functional:

\[\xi\in \mathrm{Int}(\Delta^{*})\mapsto \int_{\Delta}(\langle \xi,x\rangle + 1)^{-m-1}P(x)dx .\]

This functional is clearly strictly convex since we can take derivatives twice and see that the Hessian is strictly positive. Now we have to show that this functional is also proper, meaning the functional goes to infinity when $\xi$ goes to some boundary point of $\Delta^{*}$. If this happens, then the functional has a unique minimum point $\xi\in \mathrm{Int}(\Delta^{*})$, then take the derivative we see that $\xi$ satisfies the equation \ref{eq1}. Now let's say that some sequence $\{ \xi_{i} \}\subset \mathrm{Int}(\Delta^{*})$ converges to some boundary point $\xi$. For any small positive $c$, we consider the set $\Delta_{c} = \{x\in \Delta| \langle \xi,x\rangle + 1 \geq c\}$. Clearly $$\int_{\Delta_{c}}(\langle \xi_{i},x\rangle + 1)^{-m-1}P(x)dx\rightarrow \int_{\Delta_{c}}(\langle \xi,x\rangle + 1)^{-m-1}P(x)dx.$$ Making $c$ smaller and smaller we only need to show: 
\[\int_{\Delta}(\langle \xi,x\rangle + 1)^{-m-1}P(x)dx = +\infty\] 
for $\xi\in \partial \Delta^{*}$.

This is rather clear if $P(x)$ is strictly positive on $\Delta$. But $P(x)$ can have zeroes on $\partial \Delta$. Let's explain how to deal with this case by giving first an intuitive argument. Since $\xi\in \partial \Delta^{*}$, we must have $\langle \xi, y\rangle + 1 = 0$ for some vertex $y$ of $\Delta$. Around $y$, the term $(\langle \xi,x\rangle + 1)^{-m-1}$ provides a pole of $(m+1)$ order. The term $P(x)$ provides zero of at most $\# R^{+}\backslash R_{I}$ order. The quotient then has a pole of at least $(r+1)$ order. Let's provide now more details.

Since $\xi\in \partial \Delta^{*}$, we must have $\langle \xi, y\rangle + 1 = 0$ for some vertex $y$ of $\Delta$. Let's denote $J = \{\alpha\in S| \langle y+2\rho_{Q},\alpha \rangle = 0\} $. Note that simple roots in $I$ are all orthogonal to $2\rho_{Q} + \Delta$, so $I\subset J$. We have a sub-root system $R_{J}$ generated by $J$, let's denote the positive part as $R^{+}_{J}$. Clearly if a positive root $\alpha$ satisfies $\langle y+2\rho_{Q},\alpha\rangle = 0$, we must have $\alpha\in R_{J}^{+}$. We can pick a very large positive number $c$ and consider the set $\Delta' = \Delta\cap \bigcap_{\alpha\in R^{+}_{J}\backslash R_{I}}\{c\langle x+2\rho_{Q}, \alpha\rangle \geq \langle x,\xi\rangle + 1\}$.  Clearly $\Delta'$ is just $\Delta\cap \bigcap_{\alpha\in J\backslash I}\{c\langle x+2\rho_{Q}, \alpha\rangle \geq \langle x,\xi\rangle + 1\}$. Now in some region $\Delta''$ inside $\Delta'$ and close enough to $y$ we have that
\[(\langle \xi,x\rangle + 1)^{-m-1}P(x) \geq C(\langle \xi,x\rangle + 1)^{-m-1+\# R^{+}_{J}\backslash R_{I}}\geq C(\langle \xi,x\rangle + 1)^{-r-1},\]
for some positive constant $C$.
Hence we obtain as expected:
\[\int_{\Delta}(\langle \xi,x\rangle + 1)^{-m-1}P(x)dx \geq C\int_{\Delta''} (\langle \xi,x\rangle + 1)^{-r-1}dx = +\infty.\]

\end{proof}

\begin{remark}
    In \cite{nghiem2023spherical}, Nghiem establishes the equivalence between a weighted volume minimization principle and the existence of a conical Calabi–Yau structure on horospherical cones with klt singularities. One can get Theorem \ref{sasakihoro} by using his method. 
\end{remark}

\begin{remark}
Theorem \ref{sasakihoro} is a variant of \cite[Section 7]{MR2399609} in the toric case.
\end{remark}
\bibliographystyle{plainurl}
\bibliography{References}

Chenxi Yin, Département de mathématiques, UQAM, Montréal, Canada

\textit{Email address}: \textbf{yin.chenxi@courrier.uqam.ca/yin.chenxi.mathematics@gmail.com}
\end{document}